\newtheorem{theorem}{Theorem}[section]
\newtheorem{lemma}[theorem]{Lemma}
\newtheorem{proposition}[theorem]{Proposition}
\newtheorem{corollary}[theorem]{Corollary}
\newtheorem{conjecture}[theorem]{Conjecture}
\newtheorem*{theorem*}{Theorem}
\newtheorem*{proposition*}{Proposition}
\newtheorem*{main_scylla_thm}{Theorem~\ref{thm:main_scylla}}
\theoremstyle{definition}
\newtheorem{definition}[theorem]{Definition}
\theoremstyle{remark}
\newtheorem{remark}[theorem]{Remark}
\def\Z{\mathbb Z}
\def\R{\mathbb R}
\def\Q{\mathbb Q}
\def\N{\mathbb N}
\def\PP{\mathcal{P}}
\def\EE{\mathcal{E}}
\def\GG{\mathcal{G}}
\def\cl{\textnormal{cl}}
\def\scl{\ensuremath{\textnormal{scl}}}
\def\PSL{\textnormal{PSL}}
\def\hs1t{\ensuremath{\textnormal{Homeo}^+(S^1)^\sim}}
\begin{document}

\title{Stable commutator length in free products of cyclic groups}

\author{Alden Walker}
\address{Department of Mathematics \\ University of Chicago \\
Chicago, IL  60637}
\email{akwalker@math.uchicago.edu}

\begin{abstract}
We give an algorithm to compute stable commutator length 
in free products of cyclic groups which is polynomial time 
in the length of the input, the number of factors, 
and the orders of the finite factors.  We also 
describe some experimental and theoretical applications of this algorithm.
\end{abstract}

\maketitle


\section{Introduction}

\subsection{Background}

Low-dimensional topology is informed by experiment, 
and it is often the case that, besides being 
useful to compute examples, explicit algorithms provide 
important theoretical tools.  In this paper, 
we describe an algorithm to compute stable commutator 
length ($\scl$) in free products of cyclic groups.  

Briefly, if $G$ is a group and $w \in [G,G]$, then 
the \emph{commutator length} $\cl(w)$ is the least 
number of commutators whose product is $w$, and 
the \emph{stable commutator length} 
$\scl(w) = \lim_{n\to\infty}\cl(w^n)/n$.  Stable commutator 
length extends to a function on $B_1(G)$, the space of real 
$1$-boundaries in the group homology of $G$.  
There is another, equivalent, definition of stable commutator 
length in terms of topologically minimal surface maps, which 
will be our main interest.

Even for simple spaces, $\scl$ can be quite complicated, as we describe 
below.  An algorithm to compute $\scl$ in free groups, 
\texttt{scallop}, by Danny Calegari is described in \cite{Calegari_scl}. 
The key to \texttt{scallop} is that surface maps into free groups can be 
parametrized by the combinatorial data of a \emph{fatgraph}.
The papers 
\cite{CW_random_groups, CW_rigidity, CW_surface_subgroups, CW_isometry, CWilton_graphs} 
give theoretical results which leverage the same combinatorial ideas.

\subsection{Result}

The complexity of the \texttt{scallop} algorithm is polynomial in the input 
length and exponential in the rank of the free group (see~\cite{Calegari_scl}).  
In this paper, we describe a generalization, \texttt{scylla}, of the \texttt{scallop} 
algorithm.  The new \texttt{scylla} algorithm computes $\scl$ in 
free products of cyclic groups, both finite and infinite, and is 
polynomial time in the length of the input, the rank of the free factor, the sizes of the 
finite factors, and the total number of factors.  We should remark that 
free products of cyclic groups are virtually free, and it is 
theoretically possible to lift the $\scl$ computation to a finite index 
subgroup and use \texttt{scallop}.  However, this is infeasible: 
if $K$ is the product of the orders of the finite factors, then 
the rank of the finite index free subgroup is, in general, at least $K$.  
It is also necessary to multiply the length of the input by $K$.  
The resulting algorithm therefore 
has complexity at least $O((KI)^{K})$, where $I$ is the 
length of the original input.  It is very infeasible in practice.
By working in the group itself instead of lifting to a finite 
index subgroup, we can achieve polynomial complexity, and 
the algorithm more naturally reflects the group structure, which is 
useful theoretically.

Let $G = *_j G_j$ be a free product of the cyclic groups $G_j$.  Let $o_j$ 
be the order of the generator of $G_j$, where by convention $o_j=0$ if 
$G_j$ is infinite. Given $\Gamma =\sum_i w_i\in B_1(G)$ a collection of reduced, 
minimal length words $w_i$, we define $|\Gamma|$ to be the sum of the 
word lengths of the $w_i$.  We prove the following.

\begin{main_scylla_thm}
In the above notation, the stable commutator length $\scl\left(\sum_ig_i\right)$ is the 
solution to a rational linear programming problem 
with at most $|\Gamma|^3(1+\sum_j o_j) + |\Gamma|^2$ columns and $|\Gamma|^2(1+\sum_jo_j)$ rows.  
An extremal surface map for $\Gamma$ can be constructed from a minimizing vector.
\end{main_scylla_thm}

The linear programming problem dimensions given in the theorem are on
the correct order, but we remark that they are almost certainly overestimates.
Precise dimensions can be given in closed form in terms of how many letters in the 
$w_i$ are in each free factor, but this is not particularly illuminating.

The main contribution of this paper is the idea that surfaces 
can be built out of different kinds of pieces, and as far as computational 
complexity is concerned, the smaller the better.

\subsection{Overview of the paper}

In Section~\ref{sec:top_min}, we review topologically minimal surfaces 
and the definition of $\scl$.  In Section~\ref{sec:free_groups}, we 
show how surface maps into free groups are carried by the 
combinatorial structure of a fatgraph and how this 
structure can be used to compute $\scl$.  Section~\ref{sec:free_prods} 
extends this to give a combinatorial parametrization of
surface maps into free products of cyclic groups.  
Finally, Section~\ref{sec:extensions} shows how one can use the 
\texttt{scylla} algorithm, both experimentally and theoretically.

\subsection{Software}

The algorithm described in this paper is implemented as part 
of the \texttt{scallop} package~\cite{scallop}.  The \texttt{scallop} 
package contains the algorithms \texttt{scallop} and \texttt{scylla}, 
among other things.

\subsection{Acknowledgements}

I would like to thank Danny Calegari and Mark Sapir.  Alden Walker was supported 
by NSF grant DMS 1203888. 

\section{Topologically minimal surfaces}
\label{sec:top_min}

\subsection{Definition}

Let $X$ be a toplogical space and let $\Gamma:\coprod_i S_i^1 \to X$ 
be a collection of loops in $X$ which together are homologically trivial.
If $S$ is a surface with boundary, and $f:S \to X$ is a continuous map, 
we say that the pair $(S,f)$ is an \emph{admissible map} for $\Gamma$ if 
the diagram
\[
\begin{CD}
  S    @>f>> X \\
 @AAA      @AA\Gamma A \\
 \partial S @>>\partial f> \coprod_iS_i^1 \\
\end{CD}
\]
commutes and if, on homology, 
$\partial f_*([\partial S]) = n(S,f)\left[\coprod_iS_i^1\right]$.
That is, if $f$ takes $\partial S$ to the collection of loops 
in $X$ (so it factors through $\Gamma$), and $\partial f$ maps 
to each component of $\Gamma$ with the same degree, which we denote 
by $n(S,f)$.  There is no requirement that $S$ be connected.  

Since $S$ has boundary, genus is not a good measure of complexity, 
so for a connected surface, we define $\chi^-(S) = \min(0, \chi(S))$.  
For a general surface, $\chi^-(S)$ is the sum of $\chi^-$ over the 
connected components of $S$.  Then we define
\[
\scl(\Gamma) = \inf_{(S,f)}\frac{-\chi^-(S)}{2n(S,f)},
\]
where the infimum is taken over all surface maps $(S,f)$ admissible for 
$\Gamma$.  Intuitively, $\scl$ measures the complexity of the most 
``efficient'' surface which bounds $\Gamma$, where the surfaces are 
allowed to map with high degree if that can reduce the average 
Euler characteristic.

Now let $G$ be a group.  Let $B_1(G)$ be boundary chains in the 
group homology $H_1(G; \Z)$, and define 
\[
B_1^H(G) =  B_1(G) \otimes \R / \langle g^n=ng, hgh^{-1}=g\rangle.
\]
That is, $B_1^H(G)$ is the vector space of homologically trivial 
$1$-chains in $G$, where we have taken the quotient to make 
conjugation trivial and make taking powers the same as multiple 
copies.
Now say $\sum_iw_i \in B_1^H(G)$;  
for example, we might take a single element $w \in [G,G]$.
Then $\scl\left(\sum_iw_i\right) = \scl(\Gamma)$, where 
$\Gamma$ is a collection of loops in a $K(G,1)$ space representing the $w_i$.
Note that while the definition of $\scl$ uses specific representative loops 
in $B_1(G)$, it naturally descends to the quotient $B_1^H(G)$, because 
it obviously depends only on the free homotopy classes of the boundaries, 
and multiple boundary components of a surface $S$ mapping to 
the same loop can be joined together with $1$-handles to create a single boundary 
mapping to a power.  It is a proposition that this definition of $\scl$ 
and the group-theoretic definition in the introduction are equivalent.
See \cite{Calegari_scl}, Chapter~2 for a more 
thorough introduction to $\scl$.

Since we allow admissible surfaces to map to $\Gamma$ with 
arbitrary degree, $\scl$ need not be rational, and in fact there 
exist finitely presented groups containing elements 
with transcendental $\scl$ \cite{Zhuang_irrational}.  Even for rational 
$\scl$ values, though, there need not exist a particular 
surface map which realizes the infimum.  
If there \emph{is} such a surface, we say that it is \emph{extremal} 
for $\Gamma$.

\subsection{Experiments in free groups}

The $\scl$ spectrum is quite rich, even (especially?) for free groups.
Using \texttt{scallop} or \texttt{scylla}, it is possible to 
compute the $\scl$ of many random words.  A histogram of the values 
with large bins looks approximately Gaussian, while a histogram 
using small bins shows the fractal-like nature of the spectrum.  
See Figure~\ref{fig:histogram}.

\begin{figure}[ht]
\begin{center}
\labellist
\small\hair2pt
\pinlabel $1.4$ at 13 -8
\pinlabel $1.5$ at 171 -8
\pinlabel $1.6$ at 330 -8
\pinlabel $1.7$ at 488 -8
\pinlabel $1.8$ at 646 -8
\endlabellist
\includegraphics[scale=0.54]{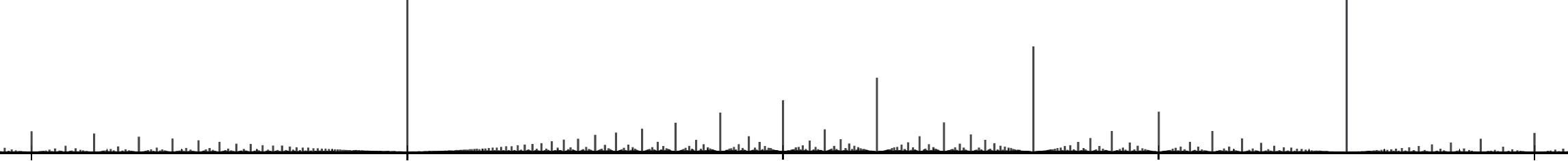}
\end{center}
\caption[A histogram of $\scl$ values for many
words of length $40$ in a free group of rank $2$.]
{A histogram of $\scl$ values for many
words of length $40$ in a free group of rank $2$.  Some of the vertical 
bars are not to scale.}
\label{fig:histogram}
\end{figure}

\section{Surface maps into free groups}
\label{sec:free_groups}

In this section, we show how to parametrize the space of 
admissible surface maps into free groups as a polyhedron 
in a vector space, and in such a way that $\scl$ can be computed 
as a linear function.  Logically, this section follows from
Section~\ref{sec:free_prods}.  However it serves as a warm 
up and introduction to our notation and methods.

\subsection{Notation}

Let $F_k$ be a free group of rank $k$.  We'll denote the rose with $k$ loops by 
$X_k = K(F_k,1)$.  Let $\Gamma \in B_1^H(F_k)$.  We will 
write $\Gamma = \sum_i w_i$, expressing $\Gamma$ as a formal sum of words. 
Note that while $B_1^H(F_k)$ is a vector space over $\R$, it suffices to 
compute $\scl$ over $\Q$ by continuity and over $\Z$ by clearing denominators 
and using homogeneity.
We use the term 
\emph{generator} to mean a generator of $F_k$; in our examples, we'll use 
$a$ and $b$ as generators of $F_k$, and we'll denote inverses with 
capital letters, so $A = a^{-1}$ and $B = b^{-1}$.  The formal sum of words 
$\sum_i w_i$ in these generators represents $\Gamma$, and we will call 
a particular letter in 
a particular location in one of these words a \emph{letter}.  It is important 
to distinguish between a generator and a particular occurrence of that 
generator in the chain $\Gamma$ (i.e., a letter).  Two letters are inverse if the 
generators they denote are inverse.  We use $\Gamma_{i,j}$ to denote letter $j$ 
of word $i$ of $\Gamma$, with indices starting at $0$.  If a chain is written 
out, we will use similar subscripts to reference letters, so if $\Gamma = abAABB+ab$, 
then $a_{0,0}$ denotes the $a$ at index $0$ of word $0$, and $B_{0,4}$ denotes the $B$ at 
index $4$ in the word $0$.

\subsection{Fatgraphs}

\begin{definition}
A \emph{fatgraph} $Y$ is a graph with a cyclic order on the 
incident edges at each vertex.  The cyclic order ensures that a 
fatgraph admits a well-defined \emph{fattening} to a surface $S(Y)$, 
and the surface $S(Y)$ deformation retracts back to $Y$.  
See Figure~\ref{fig:fatgraph_example}.
\end{definition}

Notice that $\chi(Y) = \chi(S(Y))$.

\begin{figure}[ht]
\begin{center}
\includegraphics[scale=0.5]{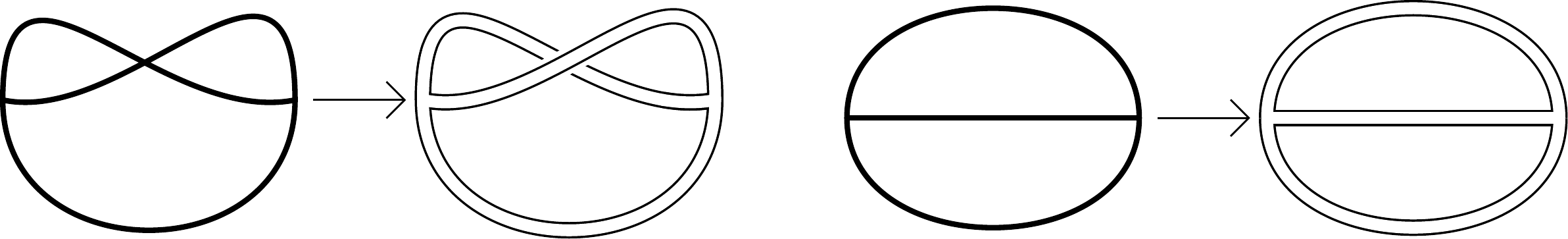}
\caption[Fatgraphs]{Two fatgraphs differing only in the cyclic orders 
at their vertices, 
and their fattenings to a once-punctured torus and a 
trice-punctured sphere.}
\label{fig:fatgraph_example}
\end{center}
\end{figure}

\begin{definition}
A \emph{fatgraph over $F_k$} is a fatgraph with a label on each side of every edge, 
where each label is a generator of $F_k$, and in such a way that inverse generators appear on 
opposite sides of the same edge.  See Figure~\ref{fig:labeled_fatgraph}.
\end{definition}

\begin{figure}[[ht]
\begin{center}
\labellist
\small\hair 2pt
\pinlabel $B$ at 207.517847 5
\pinlabel $b$ at 234.242231 45

\pinlabel $a$ at 373 147
\pinlabel $A$ at 327 131

\pinlabel $B$ at 235 297
\pinlabel $b$ at 224 251

\pinlabel $A$ at 20 234
\pinlabel $a$ at 53 199

\pinlabel $B$ at 87 82
\pinlabel $b$ at 100 38
\endlabellist
\includegraphics[scale=0.35]{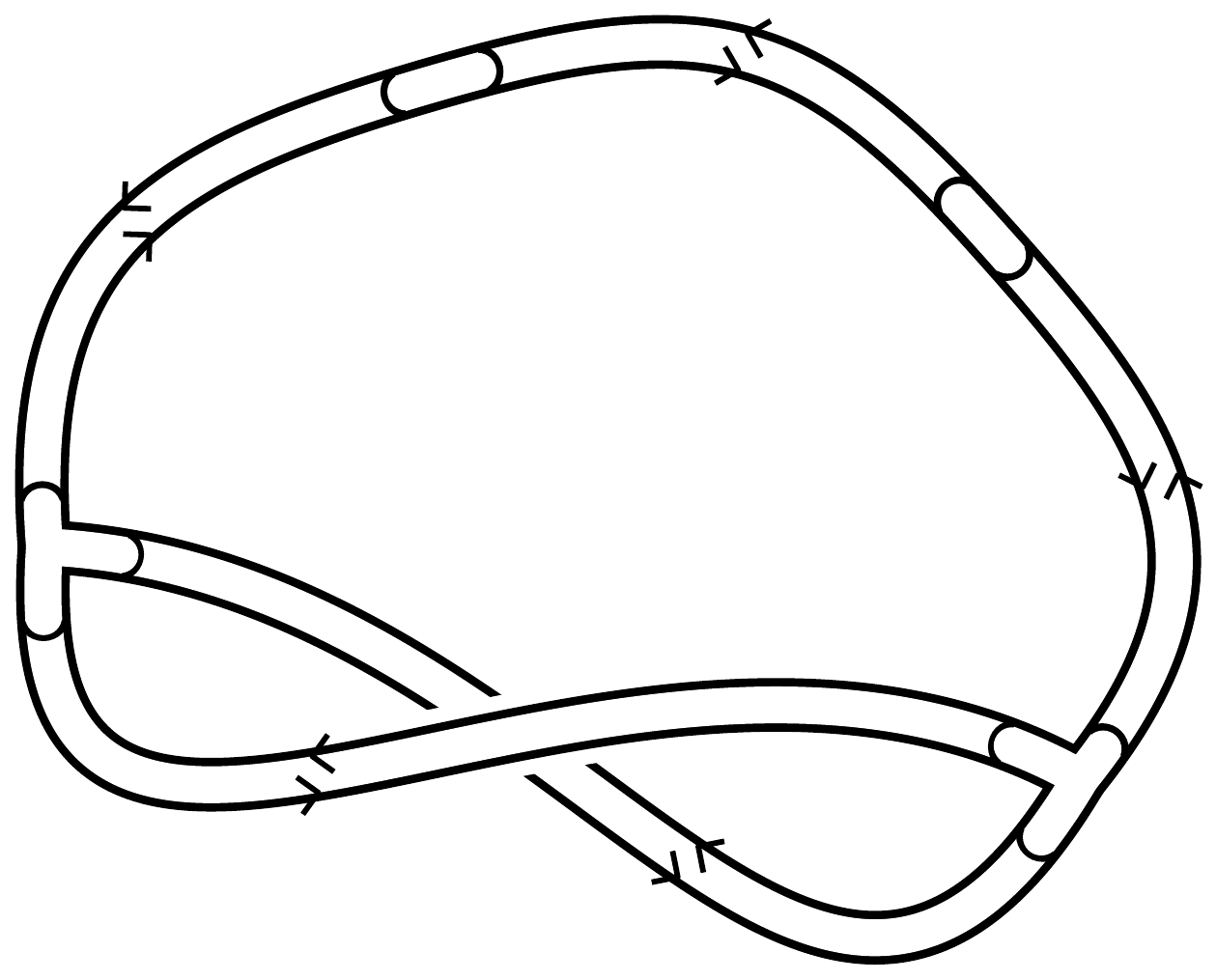}
\caption[A labeled fatgraph]{A labeled fatgraph over $F_2$.  The induced surface 
map takes the boundary to the conjugacy class of the commutator 
$[abAB,BaBA] = abABBaBAbb$.}\label{fig:labeled_fatgraph}
\end{center}
\end{figure}

A fatgraph over $F_k$ induces a surface map $S(Y) \to X_k$ by sending 
the vertices to the basepoint of $X_k$ and sending the edges around the 
loops of $X_k$ as instructed by the labels.  There is also 
a converse.
\begin{lemma}[\cite{Culler_surfaces}, Theorem~1.4]
\label{lem:fatgraphs_rep_surface_maps}
Let $S$ be a surface with boundary, and let 
$g: S \to X_k$ be a continuous map.  Then there exists a 
fatgraph $Y$ labeled over $F_k$ with induced map $h:S(Y)\to X_k$ 
such that $-\chi^-(Y) \le -\chi^-(S)$ and $g(\partial S) = h(\partial S(Y))$.
\end{lemma}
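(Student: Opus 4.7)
The plan is a standard transversality-and-collapse argument. First I pick a non-basepoint interior point $p_j \in \ell_j$ on each loop of $X_k$ and, after a small homotopy, assume $g$ is transverse to $\{p_j\}$; then $A := g^{-1}(\{p_j\})$ is a properly embedded $1$-submanifold of $S$, a disjoint union of simple closed curves and proper arcs with endpoints on $\partial S$. I then simplify $(S,g,A)$: a circle of $A$ bounding a disk in $S$ is removed by pushing $g$ across that disk; an arc of $A$ cobounding a disk with a subarc of $\partial S$ mapping to a null-homotopic loop is removed analogously; any sphere or disk component of $S$ mapping trivially is discarded. Each move is either $\chi$-preserving or strictly decreases $-\chi^-(S)$, so the target inequality is preserved throughout.

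After simplification, cut $S$ along $A$ to obtain $S'$; each component $C \subset S'$ maps into $X_k \setminus \{p_j\}$, a space that deformation retracts onto the basepoint, so I may homotope $g|_C$ rel $\partial C$ so that $C$ maps to the basepoint. If some $C$ is not a disk, I introduce auxiliary arcs $\beta \subset C$ with endpoints on $\partial C$, each together with a local homotopy of $g$ near $\beta$ that pushes the image across one of the $p_j$; this promotes $\beta$ to a new arc of $A$, and finitely many iterations reduce every component of $S \setminus A$ to a disk without changing $\chi(S)$. The fatgraph $Y$ is then defined by collapsing each disk component to a vertex and each strip neighborhood of an arc or circle of $A$ to an edge. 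The two side-labels of each edge are $\ell_j^{\pm 1}$ according to which side of $p_j$ the map sends the corresponding strip-side, and the cyclic order at a vertex is the boundary order of the collapsed disk; this data makes $Y$ a fatgraph over $F_k$.

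By construction the induced map $h: S(Y) \to X_k$ is homotopic to the simplified $g$, and in particular agrees with it on boundary images, giving $g(\partial S) = h(\partial S(Y))$. A direct CW count with $0$-cells the endpoints of $A$ on $\partial S$, $1$-cells the arcs and circles of $A$ together with the boundary arcs they cut off, and $2$-cells the disk complementary regions, gives $\chi(Y) = \chi(S)$ for the simplified $S$; combined with the fact that simplification only decreased $-\chi^-(S)$, the bound $-\chi^-(Y) \le -\chi^-(S)$ follows.

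The main obstacle is the subdivision step that reduces non-disk complementary regions to disks: one has to verify that the auxiliary arcs can always be introduced compatibly with transversality and the labeling setup (in particular, choosing $p_j$ carefully when a component $C$ has several boundary components meeting $\partial S$) so that no labeling inconsistency arises, that the cyclic order at each new vertex is unambiguous, and that the cumulative effect on Euler characteristic respects the claimed bound.
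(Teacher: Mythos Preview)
The paper does not actually prove this lemma; it cites Culler and adds the one-sentence gloss ``first compressing it and then deforming it,'' and later remarks that Lemma~\ref{lem:scylla_part_2} supplies an independent proof. Your transversality-and-collapse outline is the standard argument and is close to both Culler's proof and the argument in Lemma~\ref{lem:scylla_part_2} (which is the dual version, cutting along $f^{-1}(*)$ rather than along preimages of interior points $p_j$).

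There is, however, a genuine gap: you only remove circles of $A$ that bound disks, and never deal with \emph{essential} closed curves in $A$. Such a circle $\gamma$ survives to the collapsing step, where you propose to send its strip neighborhood---an annulus---to an ``edge''; but fatgraph edges are intervals and thicken to rectangles, not annuli, so $\gamma$ cannot be an edge of $Y$. Moreover, in your auxiliary-arc step the boundary $\partial C$ of a complementary region can contain an entire copy of $\gamma$, so the arcs $\beta$ you add will in general have endpoints on $\gamma$; after promoting them, $A$ acquires branch points and is no longer the disjoint union of arcs and circles your collapsing description presupposes. The fix is exactly the word ``compressing'' in the paper's sketch (and the explicit step ``We remove the loops by compressing $S$'' in the proof of Lemma~\ref{lem:scylla_part_2}): compress $S$ along every essential closed curve of $A$ before cutting. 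This only decreases $-\chi^-$, leaves $A$ consisting solely of proper arcs with endpoints on $\partial S$, and makes both the auxiliary-arc subdivision and the collapse to a fatgraph go through cleanly---dissolving the obstacle you flag in your final paragraph.
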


The fatgraph $Y$ is obtained from the surface $S$ by first compressing 
it and then deforming it to give the fatgraph structure.

\subsection{Surface maps into free groups}
\label{sec:surface_maps_into_free_groups}

We now look more closely at labeled fatgraphs in order to decompose them into 
\emph{rectangles} and \emph{triangles}.  This is essentially the same 
idea as the \texttt{scallop} algorithm, except we decompose further to 
improve the rigor and computational complexity.
The tedious 
notation is important only for bookkeeping reasons --- the idea of the 
decomposition is quite straightforward and is contained in the figures.

Let $S(Y) \to X_k$ be a labeled fatgraph map which is admissible for the 
chain $\Gamma$.  Consult Figure~\ref{fig:fatgraph}, 
and consider decomposing $S(Y)$ into pieces: \emph{rectangles} (the edges of $S(Y)$), and 
\emph{polygons} (the vertices of $S(Y)$). 

\begin{figure}[ht]
\begin{center}
\labellist
\small\hair 2pt
\pinlabel $a$ at 278.434188 397.221402
\pinlabel $A$ at 240.808208 367.417281
\pinlabel $A$ at 326.668456 239.761059
\pinlabel $a$ at 303.939909 197.483250
\pinlabel $b$ at 349.865943 335.551093
\pinlabel $B$ at 335.360039 289.795451
\pinlabel $a$ at 128.480549 258.093056
\pinlabel $A$ at 152.180918 299.833833
\pinlabel $A$ at -0.469690 172.086956
\pinlabel $a$ at 47.094105 178.543379
\pinlabel $b$ at 62.682862 185.883850
\pinlabel $B$ at 109.956784 177.566621
\pinlabel $b$ at 171.382889 297.080053
\pinlabel $B$ at 208.202230 327.875119
\pinlabel $b$ at 271.341896 143.209135
\pinlabel $B$ at 305.839905 176.584086
\pinlabel $A$ at 200.180582 112.725405
\pinlabel $a$ at 213.480491 66.604776
\pinlabel $b$ at 224.016726 46.102705
\pinlabel $B$ at 235.617923 -0.474244
\pinlabel $A$ at 200.574952 128.733333
\pinlabel $a$ at 170.830782 166.406724
\pinlabel $b$ at 163.641698 184.377966
\pinlabel $B$ at 123.303345 210.393678
\endlabellist
\includegraphics[scale=0.45]{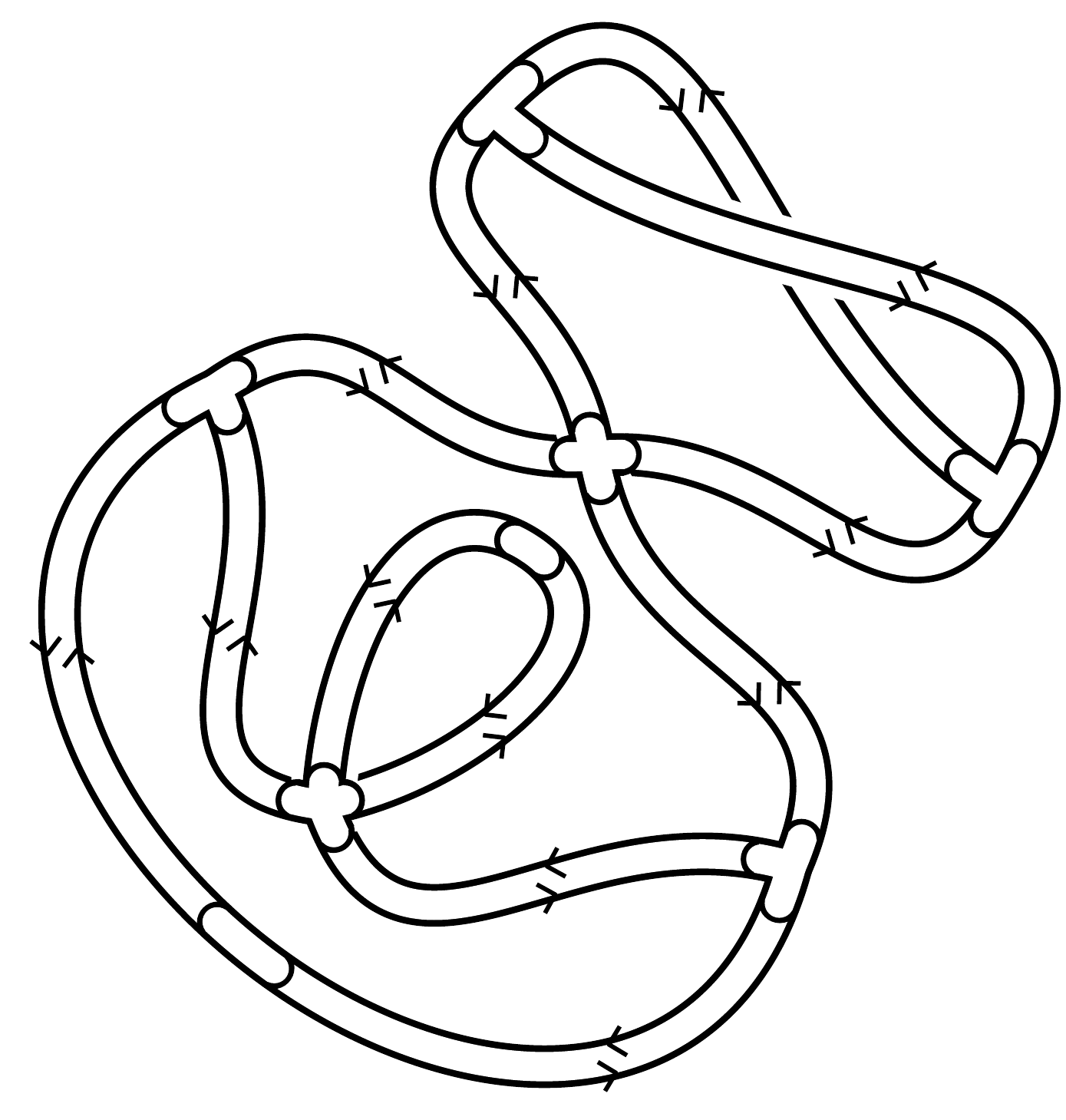}
\includegraphics[scale=0.4]{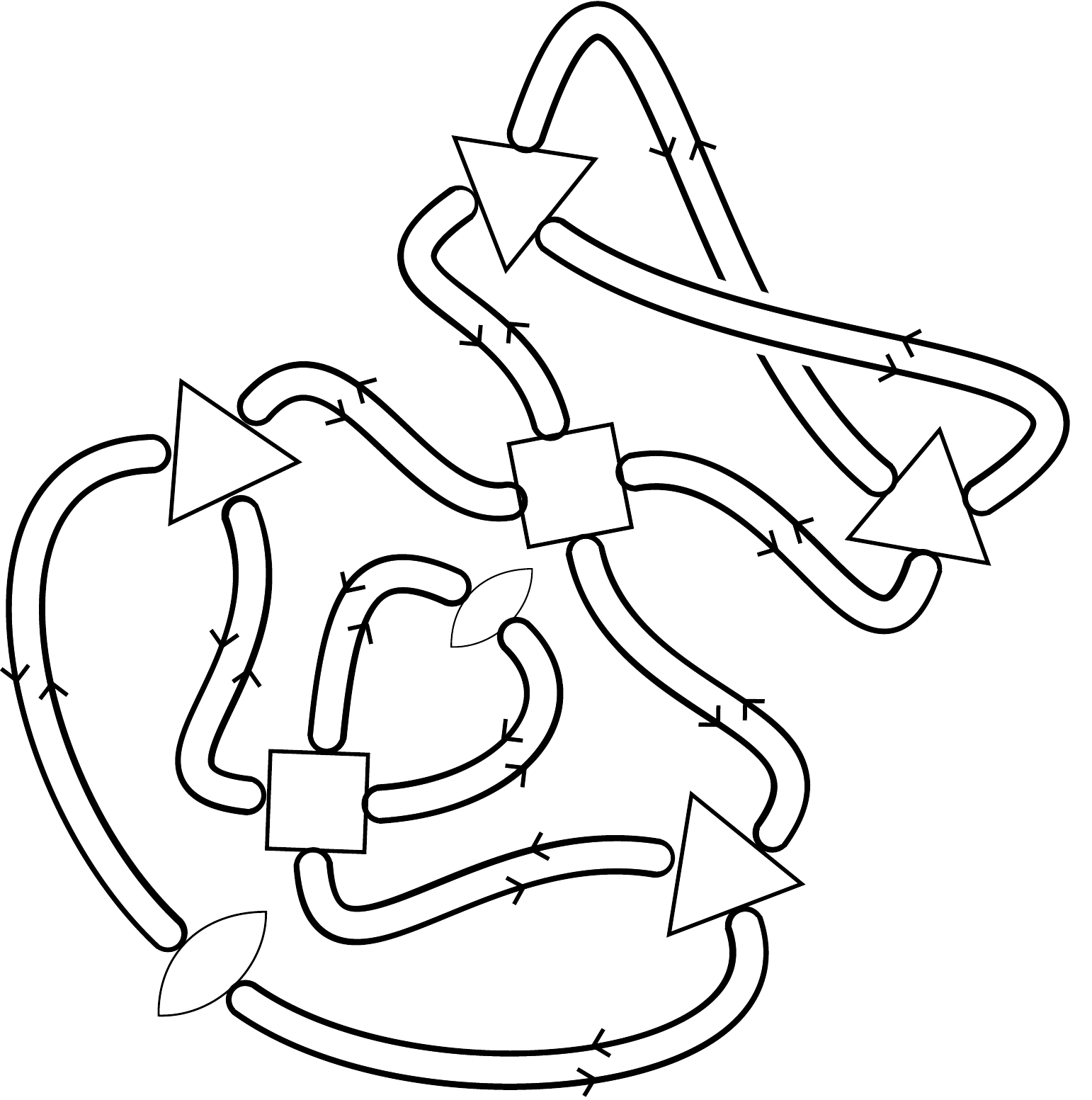}
\end{center}
\caption[A fatgraph structure]{A fatgraph structure on an extremal 
(in particular, admissible) surface for the chain $abAABB+ab$, 
and the same fatgraph split into rectangles and polygons.}
\label{fig:fatgraph}
\end{figure}

On each rectangle in $S$, we find two letters in $\Gamma$, 
say $x$ and $y$, which must be 
inverses, and we denote the rectangle with these long sides labeled by 
$x$ and $y$ by $r(x,y)$.  The rectangle $r(x,y)$ is the same as $r(y,x)$.  
We need to record the interface between rectangles and the fatgraph vertices, which 
happens along the short \emph{edges} of the rectangles.  Each edge is determined
by the adjacent long labeled sides, and we'll denote an edge as an ordered pair $e(x,y)$ 
of the incoming labeled side, say $x$, followed by the outgoing side $y$.  
Note that $e(x,y) \ne e(y,x)$.  Formally, the set of all edges is all pairs of 
letters in $\Gamma$.  Reading counterclockwise 
around a rectangle $r(x,y)$, we find the side labeled $x$, 
then the edge $e(x,y)$, then $y$, then $e(y,x)$.

Each vertex of the fatgraph, after cutting off the rectangles, 
becomes a polygon whose sides are all edges (which were attached 
to rectangle edges), and we 
want to record which edges these are.  Note that 
each vertex of the polygon lies between two letters in the boundary.  
The name of an edge of a polygon is $e(x,y)$, where 
$x$ is the letter \emph{incoming} to the initial vertex of the edge, 
and $y$ is the letter \emph{outgoing} from the terminal vertex of 
the edge.  

When we cut off a rectangle, we produce an edge on the polygon, and 
an edge on the rectangle.  
If the rectangle edge is $e(\Gamma_{i,j},\Gamma_{k,l})$, 
then the attaching polygon edge is $e(\Gamma_{k,l-1}, \Gamma_{i,j+1})$. 
See Figures~\ref{fig:rectangle} and~\ref{fig:polygon}.

\begin{figure}[ht]
\begin{center}
\labellist
\small\hair2pt
\pinlabel $a_{1,0}$ at 140 16
\pinlabel $A_{0,2}$ at 139 76
\pinlabel ${e(a_{1,0}, A_{0,2})}$ at 212 45
\pinlabel ${e(A_{0,2}, a_{1,0})}$ at 64 77
\pinlabel ${e(b_{0,1}, b_{1,1})}$ at 280 34
\pinlabel ${e(b_{1,1}, A_{0,3})}$ at 34 97
\endlabellist
\includegraphics{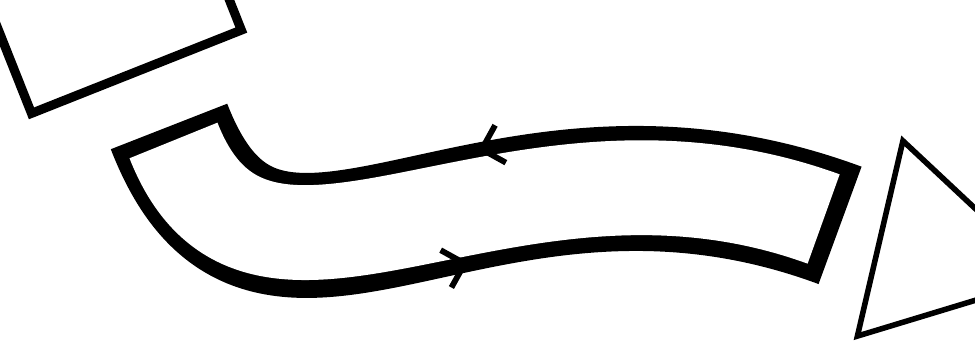}
\end{center}
\caption[A rectangle]{The rectangle $r(a_{1,0},A_{0,2})$ for the chain $abAABB+ab$, 
pictured as an enlarged piece of Figure~\ref{fig:fatgraph}, 
as found towards the lower right of Figure~\ref{fig:fatgraph}, 
with sides and short edges labeled.}
\label{fig:rectangle}
\end{figure}

Now consider what edges we can find around a polygon.  If 
two rectangles are adjacent, they must correctly read off 
a portion of the chain $\Gamma$.  In other words, the 
incoming letter on a rectangle must be immediately before 
(in $\Gamma$) the outgoing letter of the next (counterclockwise) 
rectangle.  With the definitions above, we find that 
if edge $e_2$ follows $e_1$ counterclockwise on the boundary 
of a polygon, then if $e_1 = e(x, \Gamma_{i,j})$, 
where $x$ is any letter in $\Gamma$, then  
then $e_2$ must be of the form $e(\Gamma_{i,j-1}, y)$ 
for some letter $y$.  This may be counterintuitive, as the 
\emph{following} edge is labeled by the \emph{previous} letter.  
Consult Figure~\ref{fig:polygon}.

\begin{figure}[ht]
\labellist
\small\hair2pt
\pinlabel ${e(b_{0,1},b_{1,1})}$ at 153 185
\pinlabel ${e(a_{0,1},A_{0,2})}$ at 62 210
\pinlabel $a_{0,1}$ at 13 120
\pinlabel $A_{0,2}$ at 130 313

\pinlabel ${e(a_{0,1}, B_{0,5})}$ at 165 112
\pinlabel ${e(B_{0,4}, b_{1,1})}$ at 166 70
\pinlabel $B_{0,4}$ at 270 30
\pinlabel $b_{1,1}$ at 30 47

\pinlabel ${e(B_{0,4}, A_{0,2})}$ at 184 148
\pinlabel ${e(b_{0,1}, B_{0,5})}$ at 260 200
\pinlabel $b_{0,1}$ at 233 319
\pinlabel $B_{0,5}$ at 310 97
\endlabellist
\centering
\includegraphics[scale=0.65]{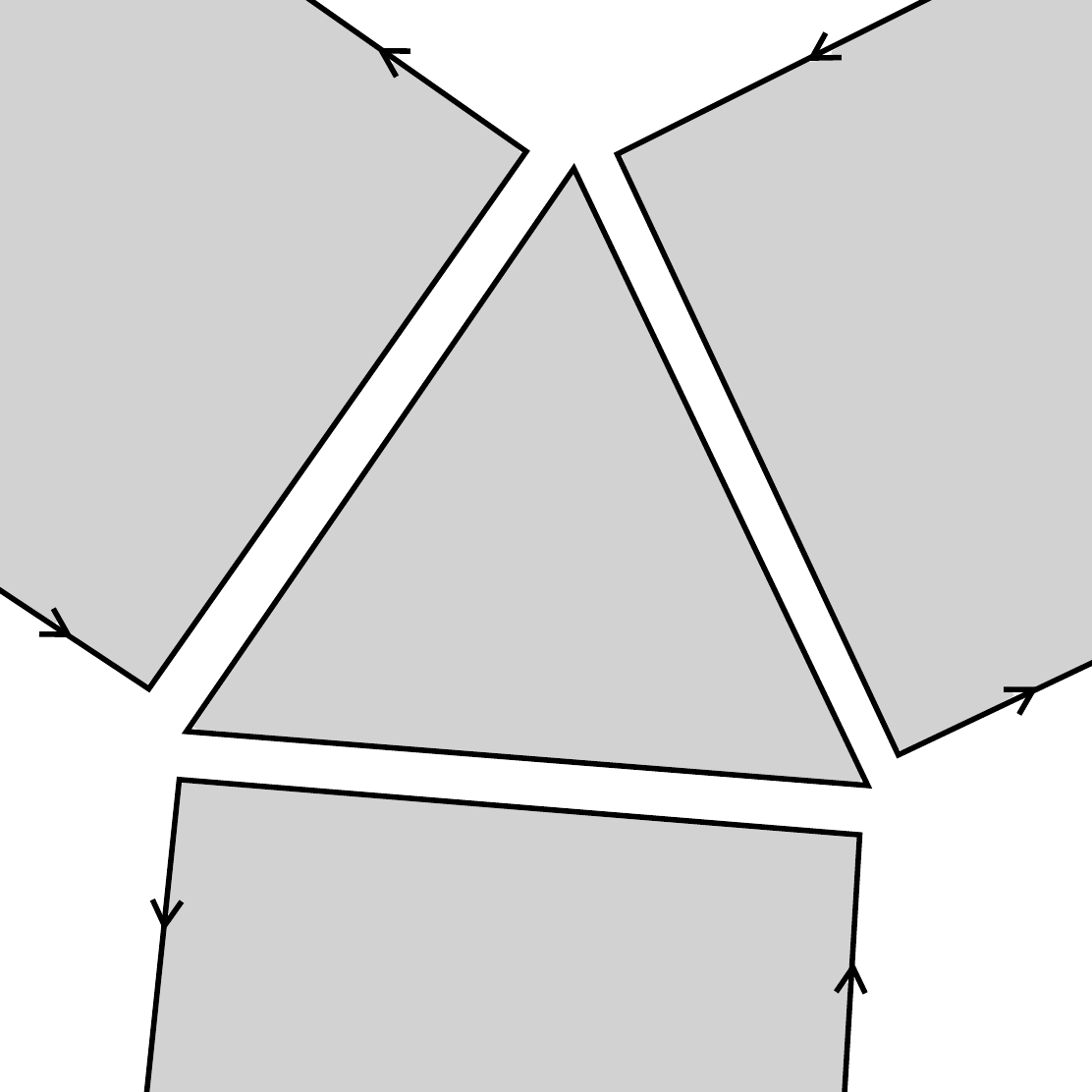}
\caption[A polygon and incident rectangles]
{A polygon and the rectangles incident to it, with edges labeled.  
This example is the lower-right hand triangle 
in Figure~\ref{fig:fatgraph}.}
\label{fig:polygon}
\end{figure}

There are only finitely many rectangles which can possibly appear in 
a fatgraph admissible for $\Gamma$, since each rectangle must correspond 
to a pair of letters in $\Gamma$ which are inverses of each other.  
However, there are infinitely many types of polygons which could occur, 
because a polygon can have an arbitrary number of sides.  
To break the fatgraph into finitely 
many types of pieces, we need to cut up the polygons
into \emph{triangles}, which we can always do; see Figure~\ref{fig:triangles}.  
It is important to cut the polygons into triangles with 
no internal vertices.

\begin{figure}[ht]
\begin{center}
\includegraphics[scale=0.5]{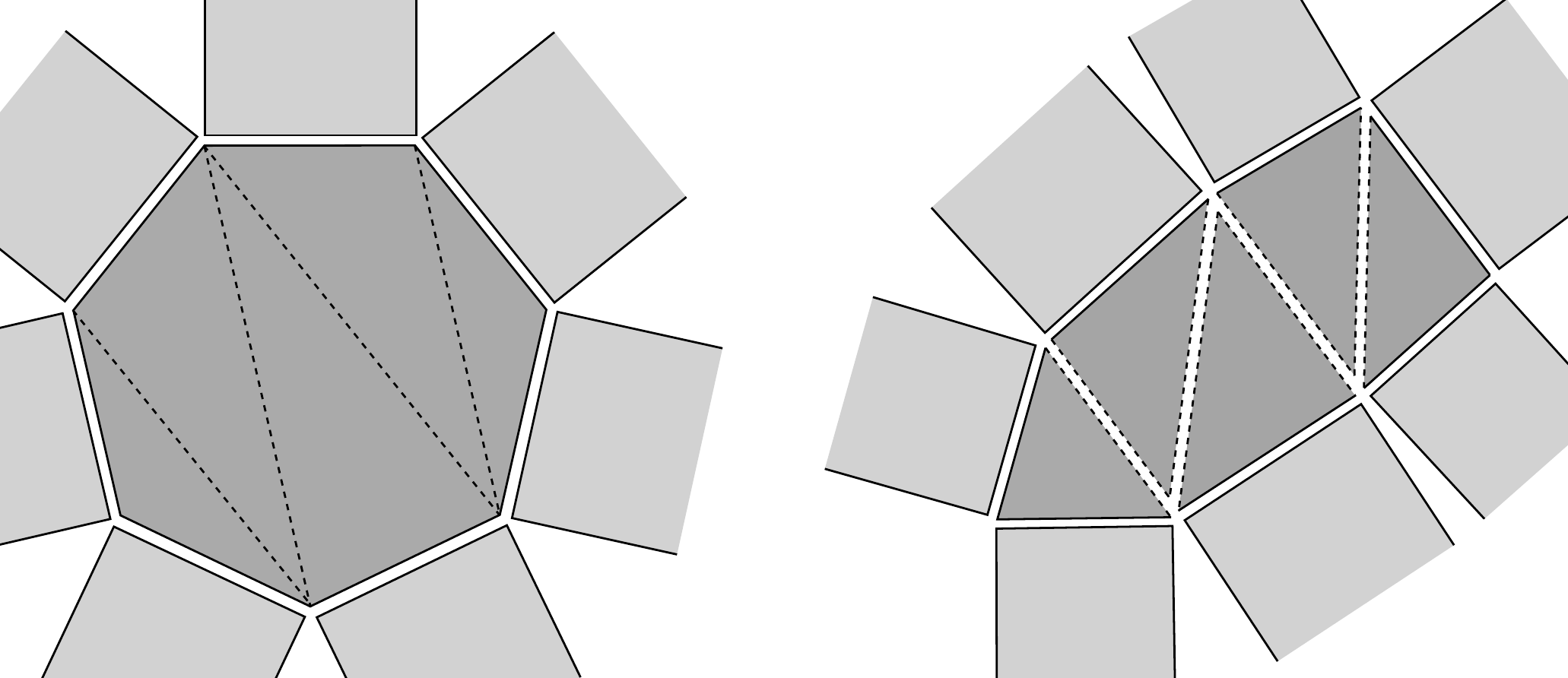}
\end{center}
\caption[Cutting a polygon into triangles]
{Cutting a polygon into triangles.}
\label{fig:triangles}
\end{figure}

The boundary of each triangle is three edges.  Some of these 
are inherited from the boundary of the polygon that we cut, and 
these edges get the labels they had originally.  There are also new 
edges that arise from cutting the polygon.  Actually, these can 
be labeled in the same way: each vertex of a triangle 
lies at some vertex of a polygon, and we label 
an edge of a triangle as $e(x,y)$ when $x$ is the letter incoming to the initial 
vertex, and $y$ is the letter outgoing from the terminal vertex.  
Just like with the attachment between the edges on 
rectangles and edges on polygons, if we find edge 
$e(\Gamma_{i,j}, \Gamma_{k,l})$ on a triangle, then the 
edge on the triangle on the other side of the cut will be 
$e(\Gamma_{k,l-1},\Gamma_{i,j+1})$.  We denote a triangle 
$t(e_1, e_2, e_3)$, using its cyclically ordered edges.  
See Figure~\ref{fig:triangle_labels}.
Note that each type of rectangle, triangle, and edge may appear many times 
in our decomposition.

\begin{figure}[ht]
\begin{center}
\labellist
\small\hair2pt
\pinlabel $a_{0,0}$ at 322 265
\pinlabel $A_{0,3}$ at 167 322
\tiny
\pinlabel $e(A_{0,3},a_{0,0})$ at 235 282
\pinlabel $e(B_{0,5},B_{0,4})$ at 230 230

\small
\pinlabel $B_{0,4}$ at 147 313
\pinlabel $b_{1,1}$ at -10 175
\tiny
\pinlabel $e(b_{1,1},B_{0,4})$ at 68 250
\pinlabel $e(A_{0,3},a_{1,0})$ at 100 190

\small
\pinlabel $a_{1,0}$ at -10 145
\pinlabel $A_{0,2}$ at 137 0
\tiny
\pinlabel $e(b_{1,1},A_{0,3})$ at 100 130
\pinlabel $e(A_{0,2},a_{1,0})$ at 50 90

\small
\pinlabel $A_{0,3}$ at 172 -6
\pinlabel $a_{1,0}$ at 323 50
\tiny
\pinlabel $e(a_{1,0},A_{0,3})$ at 235 35

\small
\pinlabel $b_{1,1}$ at 325 75
\pinlabel $B_{0,5}$ at 325 240
\tiny
\pinlabel $e(B_{0,5},b_{1,1})$ at 335 157

\pinlabel $(C)$ at 148 160
\pinlabel $(D)$ at 182 150
\pinlabel $(E)$ at 225 157
\pinlabel $(F)$ at 250 140
\pinlabel $(G)$ at 237 75
\pinlabel $(H)$ at 275 140
\normalsize
\pinlabel $(C):\quad~e(A_{0,2},B_{0,4})$ at 440 300
\pinlabel $(D):\quad~e(A_{0,3},A_{0,3})$ at 440 275
\pinlabel $(E):\quad~e(A_{0,2},a_{0,0})$ at 440 250
\pinlabel $(F):\quad~e(B_{0,5},A_{0,3})$ at 440 225
\pinlabel $(G):\quad~e(A_{0,2},b_{1,1})$ at 440 200
\pinlabel $(H):\quad~e(a_{1,0},a_{0,0})$ at 440 175
\endlabellist
\hspace{-3cm}\includegraphics[scale=0.6]{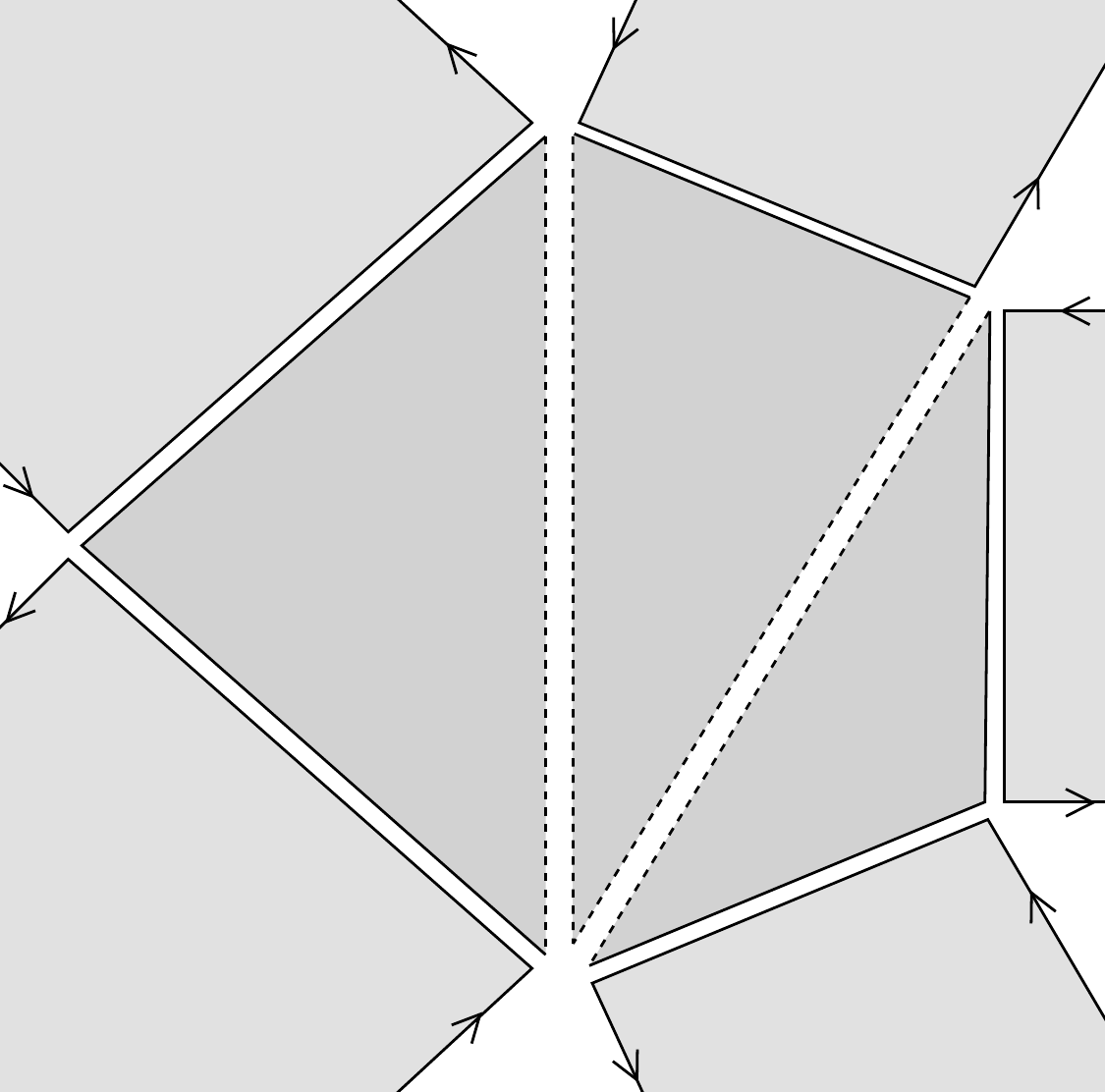}
\end{center}
\caption[Triangle edges]{Labeling the edges of the triangles 
for a hypothetical collection of triangles and rectangles 
for the chain $abAABB+ab$.  For clarity, some of the labels have been 
moved off to the right, as indicated.}
\label{fig:triangle_labels}
\end{figure}

\subsection{Building surface maps}

To summarize, we have shown that, after possibly compressing, 
any surface map admissible for a chain $\Gamma$ is carried by a 
labeled fatgraph map, and 
this fatgraph can be cut into rectangles, which are specified by 
a pair of inverse letters in $\Gamma$, and triangles, which 
are specified by three cyclically ordered compatible edges.  
Recall that three edges are compatible if each edge of the 
form $e(x, \Gamma_{i,j})$ for some letter $x$ is cyclically followed
by an edge of the form $e(\Gamma_{i,j-1}, y)$ for some letter $y$.
To obtain the fatgraph back from the pieces into which we cut it, 
we glue back rectangles and triangles along edges.  
Each edge appears in possibly many rectangles and triangles, but if 
we remember the original fatgraph, we can glue appropriately.  
All gluings glue an edge labeled $e(\Gamma_{i,j},\Gamma_{k,l})$ to one 
labeled $e(\Gamma_{k,l-1}, \Gamma_{i,j+1})$.  We'd like to show that if 
instead of starting with the fatgraph and cutting, we 
start with a collection of the pieces satisfying some constraints, then 
we can glue them to give a labeled fatgraph and thus an admissible 
surface map for $\Gamma$.  It turns out that we can't always assemble the 
pieces into a fatgraph, but we \emph{can} assemble them into a surface map, 
and that will be enough.

We must be slightly careful to avoid degenerate pieces.  A 
\emph{dummy edge} is an edge $e(x,y)$, where $y$ 
follows $x$ cyclically in $\Gamma$.  Since $\Gamma$ 
is reduced, no rectangle can contain a dummy edge, but we make the 
additional constraint that no triangle can contain a dummy edge.  
Note that any surface admissible for $\Gamma$ 
that is glued along a dummy edge 
can be cut along the dummy edge to produce a surface admissible 
for $\Gamma$ and with smaller Euler characteristic.

We define a \emph{piece} to be either a rectangle or triangle.  Let $\PP_\Gamma$ 
be the set of all types of pieces.  Note that $|\PP_\Gamma| \le |\Gamma|^3$, since 
the rectangles are specified by two letters, and the triangles by three.
In particular, $\PP_\Gamma$ is finite.  Let $\EE_\Gamma$ be the set of all 
edges, not including dummy edges, 
and similarly, $|\EE_\Gamma| \le |\Gamma|^2$.  Let $V_\Gamma = \Q[\PP_\Gamma]$ be the 
vector space spanned by the pieces, and let $E_\Gamma = \Q[\EE_\Gamma]$ be the 
vector space spanned by the edges.  There is a boundary 
map $\partial : V_\Gamma \to E_\Gamma$ defined on generators by taking 
each rectangle or triangle to the sum of its edges.  Specifically, 
$\partial(r(x,y)) = e(x,y) + e(y,x)$, and 
$\partial(t(e_1, e_2, e_3)) = e_1 + e_2 + e_3$.  On edges, there is a 
map which records if a collection of edges is compatible: we define 
$\iota:E_\Gamma \to E_\Gamma$ on generators by 
$\iota( e(\Gamma_{i,j},\Gamma_{k,l})) = -e(\Gamma_{k,l-1}, \Gamma_{i,j+1})$.
These maps will allow us to determine whether a collection of pieces can 
be glued up to produce a fatgraph.

We also need to extract Euler characteristic from the pieces.  We define 
$\chi:V_\Gamma \to \Q$ on generators to be $0$ on every rectangle, and $-1/2$ on 
every triangle.  For each word $w_i$ in the sum $\Gamma = \sum w_i$, we define 
$N_i:V_\Gamma \to \Q$ to be zero on all triangles, and $1$ on a rectangle 
if and only if one of the sides of the rectangle is the 
first letter in $w_i$.  Note that the first letter of $w_i$ cannot 
be on both sides of a rectangle, because then it would be its own inverse.

The set of positive vectors in the subspace $\ker (\iota \circ \partial)$ 
is a cone in $V_\Gamma$, which we denote by $C_\Gamma$, and the intersection 
of $C_\Gamma$ with the affine subspace $\{v \in V\,|\, N_i(v) = 1 \, \forall\,i\}$ 
is a polyhedron, which we denote by $P_\Gamma$.  This is the 
\emph{admissible polyhedron} for $\Gamma$.

\begin{proposition}\label{prop:free_groups}
In the above notation, $\scl(\Gamma) = \inf_{v \in P_\Gamma} -\chi(v)/2$.  
Furthermore, an extremal surface for $\Gamma$ can be extracted 
from a minimizing vector in $P_\Gamma$.
\end{proposition}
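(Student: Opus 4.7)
The plan is to establish the equality by proving both inequalities separately and then to note that the minimum of a linear function on a rational polyhedron is attained at a rational vertex, which suffices for the extremal surface claim.

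For the lower bound $\scl(\Gamma) \ge \inf_{v \in P_\Gamma} -\chi(v)/2$, start with an admissible surface map $(S,f)$ and invoke Lemma~\ref{lem:fatgraphs_rep_surface_maps} to replace it by a fatgraph map $h : S(Y) \to X_k$ with $-\chi^-(S(Y)) \le -\chi^-(S)$ and matching boundary. Decompose $S(Y)$ into rectangles (one per edge of $Y$) and triangles (by triangulating each polygon without interior vertices) as in Section~\ref{sec:surface_maps_into_free_groups}, and let $v \in V_\Gamma$ count these pieces. The gluing data of the fatgraph immediately gives $\iota(\partial v) = 0$, since each short edge is paired with its unique compatible partner, and traversing the boundary shows $N_i(v) = n(S,f)$ for every $i$, so $v/n(S,f) \in P_\Gamma$. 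A short count, using that an $n$-gon triangulates into $n-2$ triangles and that the total number of polygon sides equals twice the number of edges of $Y$, verifies $\chi(v) = \chi(S(Y))$, which delivers the inequality.

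For the upper bound, take a rational $v \in P_\Gamma$ and clear denominators to obtain an integer vector with $N_i(v) = D$ for some positive integer $D$. Take that many copies of each piece as a disjoint union. Because the edge-compatibility rule $e(\Gamma_{i,j},\Gamma_{k,l}) \leftrightarrow e(\Gamma_{k,l-1},\Gamma_{i,j+1})$ is an involution, the condition $\iota(\partial v) = 0$ guarantees the existence of a perfect matching of short edges between compatible partners; glue along any such matching. The resulting $S$ is a surface with boundary consisting of the labeled long sides of the rectangles. Define $f : S \to X_k$ by sending each long side around the appropriate loop of the rose and all other points to the basepoint. Edge compatibility forces consecutive letters in $\Gamma$ at each rectangle-to-rectangle transition along the boundary, so each boundary component reads a cyclic conjugate of a power of some $w_i$, and $N_i(v) = D$ makes the degree on every $w_i$ equal to $D$. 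Thus $(S,f)$ is admissible of degree $D$, and $\chi(S) = \chi(v)$ by the same counting identity, giving $\scl(\Gamma) \le -\chi(v)/(2D) = -\chi(v/D)/2$.

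The main obstacle will be verifying that the assembled $S$ is genuinely a surface---that each point has a disc or half-disc neighborhood---rather than a branched or pinched complex. This requires checking that the link of every vertex of $S$ is a single circle or arc, which uses the fact that adjacent pieces meet along whole short edges rather than at isolated vertices, together with the no-dummy-edge restriction that rules out degenerate local configurations. The coherence of boundary traversals must be checked with the same care, and is ensured by the same compatibility conditions together with the normalization $N_i(v) = D$. Finally, the extremal statement follows because $-\chi/2$ is a rational linear function on the rational polyhedron $P_\Gamma$: if its infimum is finite, it is attained at a rational vertex, from which the construction above produces an honest admissible surface realizing $\scl(\Gamma)$.
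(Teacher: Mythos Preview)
Your lower-bound direction matches the paper's Lemma~\ref{lem:free_groups_1} and is fine.  The gap is in the upper bound.  When you glue an integral multiple of $v$ along $\iota$-compatible edges, the result $S$ is automatically a surface: gluing polygons along whole edges in pairs always yields a $2$-manifold, so checking vertex links is not the obstacle, and the no-dummy-edge restriction plays no role here.  The actual issue is your assertion that $\chi(S)=\chi(v)$ ``by the same counting identity.''  That identity computes $\chi$ of the \emph{spine} (one vertex per piece, one edge per gluing), and $\chi(S)=\chi(\text{spine})$ only when $S$ deformation retracts to the spine.  This fails exactly when some triangle vertices land in the \emph{interior} of $S$---for instance, several triangles may tile a polygon around a common internal vertex.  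Nothing in $\iota(\partial v)=0$ or the dummy-edge ban prevents this, and in that case $\chi(S)>\chi(v)$ strictly.

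The paper's Lemma~\ref{lem:free_groups_2} handles this by observing that such internal vertices are branch points of the map $f$ (which crushes all triangles to the basepoint), and that their presence only \emph{increases} $\chi(S)$, hence only \emph{decreases} $-\chi^-(S)$; so the desired inequality $-\chi^-(S)/2n(S,f)\le -\chi(v)/2$ survives even though equality of Euler characteristics fails.  For the extremal-surface claim, the paper then argues that at a minimizing $v$ one has
\[
\scl(\Gamma)\ \le\ \frac{-\chi^-(S)}{2n(S,f)}\ \le\ -\frac{\chi(v)}{2}\ =\ \scl(\Gamma),
\]
forcing equality throughout; this in turn forces the absence of internal vertices, so $S$ is an honest fatgraph realizing the infimum.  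Your final paragraph needs this extra squeeze, since without it the constructed $S$ could a priori satisfy the inequality strictly, and you would not know it is extremal (nor would you know it carries a fatgraph structure).
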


The proof of Proposition~\ref{prop:free_groups} breaks into 
two lemmas, one for each direction of an inequality.

\begin{lemma}\label{lem:free_groups_1}
Given $f:S \to X_k$ an admissible surface for $\Gamma$, there is 
a vector $v \in P_\Gamma$ so that $-\chi(v)/2 \le -\chi^-(S)/2n(S,f)$.
\end{lemma}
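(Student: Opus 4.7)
The plan is to replace $S$ with a more combinatorial object, decompose it into the pieces of $\PP_\Gamma$, and read off $v$ as the (normalized) multiset of pieces produced. First, I would apply Lemma~\ref{lem:fatgraphs_rep_surface_maps} to obtain a labeled fatgraph $h:S(Y)\to X_k$ with $-\chi^-(S(Y))\le -\chi^-(S)$ and $h(\partial S(Y))=f(\partial S)$; in particular $h$ is still admissible for $\Gamma$ with $n(S(Y),h)=n(S,f)$, which I abbreviate $n$. I would then discard any sphere or disk components of $S(Y)$ (which have positive Euler characteristic and either carry no boundary or carry a nullhomotopic boundary, and so can be stripped off without harming admissibility or increasing $-\chi^-$), so that on the remainder $\chi(S(Y))=\chi^-(S(Y))\le\chi^-(S)$.

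Next I would cut $S(Y)$ into rectangles and polygons, triangulate each polygon with no internal vertices, and define $v\in V_\Gamma$ by setting the coefficient on each piece $P\in\PP_\Gamma$ to be $1/n$ times the number of copies of $P$ appearing in the decomposition. I need to check the three conditions defining $P_\Gamma$. Positivity is automatic. For the gluing condition $\iota\circ\partial(v)=0$: by construction each short edge of a rectangle and each edge of a triangle is glued to exactly one other such edge in the fatgraph, and the description of the decomposition forces the pairing to match the rule $e(\Gamma_{i,j},\Gamma_{k,l})\leftrightarrow e(\Gamma_{k,l-1},\Gamma_{i,j+1})$, which is exactly the pairing encoded by $\iota$; hence the contributions of each gluing to $\iota\circ\partial(v)$ cancel. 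For the normalization $N_i(v)=1$: since $h$ has degree $n$ onto $\Gamma$, the first letter of $w_i$ appears exactly $n$ times along $\partial S(Y)$, and each such appearance corresponds to one rectangle whose long side is labeled by that letter (the first letter of $w_i$ cannot be its own inverse, so it labels only one long side per rectangle), giving $N_i(v)=n/n=1$.

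It remains to compare $\chi(v)$ to $\chi(S(Y))$. Writing $V=|V(Y)|$ and $E=|E(Y)|$, each edge of $Y$ produces one rectangle, and a fatgraph vertex of valence $d$ produces a $d$-gon, which triangulates into $d-2$ triangles; summing, the total number of triangles is $\sum_v(d_v-2)=2E-2V=-2\chi(Y)=-2\chi(S(Y))$. Since $\chi$ vanishes on rectangles and equals $-1/2$ on triangles, $\chi(v)=\chi(S(Y))/n\ge\chi^-(S)/n$, which rearranges to $-\chi(v)/2\le -\chi^-(S)/(2n)$.

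The main obstacle is the bookkeeping for the gluing compatibility step: one has to verify cleanly that every edge of every piece is glued along exactly the pairing $\iota$ encodes and that no edge survives as a ``free" edge (after stripping disk components this is automatic, but it requires an argument that there are no dummy-edge gluings, which is precisely the point of forbidding dummy edges from rectangles and triangles). A secondary care is needed in the Culler reduction step to confirm that after discarding non-hyperbolic components the remaining fatgraph still has degree $n$ onto $\Gamma$, so that the normalization works as stated.
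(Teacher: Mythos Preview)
Your proposal is correct and follows essentially the same approach as the paper: apply Lemma~\ref{lem:fatgraphs_rep_surface_maps}, cut the resulting fatgraph into rectangles and triangles, record the multiset as a vector, normalize, and verify the constraints and the Euler characteristic identity. Your Euler characteristic calculation via $\sum_v(d_v-2)=2E-2V$ is in fact more explicit than the paper's, which simply appeals to the spine being a graph with $2$- and $3$-valent vertices; the two concerns you flag (dummy edges and discarding disk components) are genuine bookkeeping points but, as you note, are handled by the standing assumption that $\Gamma$ is reduced.
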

\begin{proof}
By Lemma~\ref{lem:fatgraphs_rep_surface_maps}, there is a 
labeled fatgraph $Y$ with $-\chi(Y) \le -\chi(S)$ so that 
the map $f':S(Y) \to X_k$ has the same boundary image as $S$.  
In particular, $(S(Y), f')$ is admissible for $\Gamma$, and 
$n(S(Y), f') = n(S,f)$.  Cut $Y$ into rectangles and triangles, 
and record the number of each type in an (integral) vector $v \in V_\Gamma$.  
The total degree of $S(Y)$ over each loop $w_i$ can be determined 
by counting the number of times that $w_i$ appears in the 
boundary of $S(Y)$, which is the same as counting the number of times 
that the first letter of $w_i$ appears.  That is, 
$N_i(v) = n(S(Y),f') = n(S,f)$ for all $i$.  Furthermore, 
$\chi(v) = \chi^-(S(Y))$, because $S(Y)$ is homotopy equivalent to 
its spine, which is a graph with one $2$-valent vertex for each rectangle 
and one $3$-valent vertex for each triangle, so the definition of 
$\chi(v)$ on generators clearly computes Euler characteristic. 
Scaling $v$ by $1/N_i(v)$ (for any $i$, as they are all equal) 
gives a vector in $P_\Gamma$, and we have
\[
-\frac{1}{2}\chi\left( \frac{1}{N_i(v)}v \right)  = -\chi(v) \frac{1}{2N_i(v)} 
                                  = \frac{-\chi^-(S(Y))}{2n(S(Y),f')}
                                  \le \frac{-\chi^-(S)}{2n(S,f)},
\]
which completes the proof.
\end{proof}

\begin{lemma}\label{lem:free_groups_2}
Given $v \in P_\Gamma$, there is a surface map $f: S \to X_k$ admissible 
for $\Gamma$ with $-\chi^-(S)/2n(S,f) \le -\chi(v)/2$.
\end{lemma}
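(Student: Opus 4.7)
The plan is to invert the construction of Lemma~\ref{lem:free_groups_1}: given $v \in P_\Gamma$, assemble the pieces indicated by $v$ into a surface. By rationality and homogeneity of the data we may clear denominators so that $v \in V_\Gamma$ has non-negative integer entries $v_p$; after this rescaling, $N_i(v) = N$ is a common positive integer for all $i$ and the target ratio $-\chi(v)/2$ is unchanged. Form the disjoint union $\widetilde S$ of $v_p$ copies of each piece $p$, together with the natural map $\widetilde f : \widetilde S \to X_k$: on each rectangle, send the long sides around their labeled loops; collapse each triangle to the basepoint.

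The hypothesis $(\iota \circ \partial)(v) = 0$ says that for every pair of dual edge-types $(e, e^*)$, the number of $e$-instances among the pieces in $\widetilde S$ equals the number of $e^*$-instances. Pick any bijection between them for each dual pair and glue the corresponding short edges of the pieces (the labels on the endpoints uniquely determine how to match the two ends of each edge). Let $S$ be the resulting $2$-complex and $f : S \to X_k$ the induced map. The boundary $\partial S$ consists of the un-glued long sides of rectangles strung into circles; the definition of dual edges guarantees that consecutive long sides along each boundary circle come from consecutive letters of some $w_i$. Thus $\partial S$ factors through $\Gamma$, and the number of long sides on $\partial S$ labeled by the first letter of $w_i$ is exactly $N_i(v) = N$, so $(S,f)$ is admissible with $n(S,f) = N$.

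The main obstacle is that $S$ may not be a genuine surface at its vertices. However, because each glued endpoint of each corner-arc is paired with exactly one other endpoint via the edge gluings, the link of every vertex of $S$ is automatically a disjoint union of circles (at interior vertices) or of arcs (at boundary vertices), with no branching. Resolve this by splitting each non-manifold vertex into one new vertex per component of its link; the result $S'$ is an honest surface with boundary, and carries the same map to $X_k$ and the same boundary circles as $S$.

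For the Euler-characteristic bound, each component of $S'$ with non-empty boundary deformation-retracts onto its spine: a graph with one $2$-valent vertex per rectangle, one $3$-valent vertex per triangle, and one edge per glued short-edge pair in the component. Summing over all such components gives $\chi$-contribution $-\tfrac{1}{2}v_{\text{tri}} = \chi(v)$. Any closed components of $S'$ contribute nothing to $n(S',f)$ and may be discarded; disk components cannot arise, since a disk's boundary would map to a cyclic power of some $w_i$ trivial in $F_k$, contradicting the reducedness of $w_i$. A direct accounting of the closed-component corrections (which only increase $\chi$, and so do not decrease $\chi^-$) yields $\chi^-(S') \ge \chi(v)$, equivalently $-\chi^-(S')/2n(S',f) \le -\chi(v)/2$, as required.
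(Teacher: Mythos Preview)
Your overall strategy matches the paper's: clear denominators, glue the pieces along $\iota$-paired edges, read off the boundary and the degree, and bound the Euler characteristic by the linear function $\chi$. Two remarks, one minor and one a genuine gap.

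First, the vertex-splitting step is unnecessary. In the glued complex, a vertex is by definition an equivalence class of piece-corners under the relation ``identified by an edge-gluing,'' and two corners are adjacent in the link of that vertex exactly when they share a glued edge-end. So the link is automatically connected: the glued object is already a surface, and your $S'$ equals $S$. This is harmless, just superfluous.

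The genuine gap is the assertion that each component with non-empty boundary deformation retracts onto the piece-dual spine (one $2$-valent vertex per rectangle, one $3$-valent vertex per triangle, one edge per glued pair). This is false whenever the triangle gluings produce an \emph{interior} $0$-cell --- for instance, when several triangles are arranged cyclically around a common vertex that does not lie on $\partial S$. In that situation the embedded spine misses a small disk around the interior vertex, so $S$ does not retract onto it, and in fact $\chi(S)$ exceeds $\chi$ of the spine by exactly the number of such interior vertices. Your final sentence about ``closed-component corrections'' does not repair this: the problematic interior vertices can sit on components \emph{with} boundary.

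The paper's fix is to puncture $S$ at every interior $0$-cell. The punctured surface genuinely deformation retracts onto the spine, so its Euler characteristic equals $\chi(kv)$; and puncturing only decreases $\chi$, hence $-\chi(S)\le -\chi(kv)$. Combined with the absence of disk components (which you argue correctly) and the fact that closed components contribute nonnegatively to $-\chi^-$ but nothing to $n(S,f)$, this yields $-\chi^-(S)\le -\chi(kv)$ and the desired inequality. Once you insert this puncturing argument in place of the incorrect retraction claim, your proof is complete and essentially identical to the paper's.
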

\begin{proof}
There is some $k \in \Z$ so that $kv$ is integral.  Therefore, $kv$
represents a collection of pieces with the property that 
every edge $e$ appears the same number of times as its 
gluing partner $\iota(e)$.  Each piece may appear many times, and each edge 
many times in many pieces.  Glue the pieces arbitrarily along $\iota$-pairs of edges to produce a surface $S$.  
Define a map $f:S \to X_k$ which takes all the triangles to the basepoint 
and all the rectangles around the appropriate edges.  Notice that 
this map may have branch points, since it is possible that we 
glued up the triangles in such a way that not every vertex is on the boundary 
of $S$ (the triangles may tile a polygon with a central vertex, for example).  
It is also possible that this surface is compressible (if the triangles 
tile an annulus which is crushed to the basepoint, for example).  

Regardless of the branch points or compressibility of the surface map, 
it is true that $-\chi^-(S) \le -\chi(kv)$, since we have only added branch 
points, and thus only decreased $-\chi^-$.  If the reader likes, we can subtract 
the preimages of the branch points from $S$, which only increases $-\chi$.  The surface 
now deformation retracts to 
its spine, which is the same graph as above, with $2$ and $3$ valent vertices for 
the rectangles and triangles, so its Euler characteristic is computed by the
linear function $\chi$ on $V_\Gamma$.

Also, we have $n(S,f) = N_i(kv) = kN_i(v) = k$ (for all $i$), by the same 
counting-first-letters argument as above, so we compute
\[
\frac{-\chi^-(S)}{2n(S,f)} \le \frac{-\chi(kv)}{2N_i(kv)} = -\frac{1}{2}\chi(v),
\]
which completes the proof.
\end{proof}

\begin{proof}[Proof of Proposition~\ref{prop:free_groups}]
To prove Proposition~\ref{prop:free_groups}, we simply 
apply Lemmas~\ref{lem:free_groups_1} and~\ref{lem:free_groups_2}, which 
immediately give
\[
\scl(\Gamma) = \inf_{(S,f)} \frac{-\chi^-(S)}{2n(S,f)} =  \inf_{v \in P_\Gamma} -\chi(v)/2.
\]
It remains to show the existence of an extremal surface.  
Given a minimizing vector $v \in P_\Gamma$, we use Lemma \ref{lem:free_groups_2} 
to construct an admissible surface map $f: S \to X_k$ with 
$-\chi^-(S)/2n(S,f) \le -\chi(v)/2$.  A priori, the inequality may be 
strict.  However, since $v$ is minimal, it must be an equality.  
Notice that $S$ must therefore contain no branch points, and in particular 
must be an honest fatgraph.
\end{proof}

\section{Surface maps into free products}
\label{sec:free_prods}

\subsection{Introduction}

In this section, we extend the construction of Section~\ref{sec:free_groups} 
to handle surface maps into free products of cyclic groups.  
Let $G= *_j G_j$, where $G_j$ is cyclic.  Let $o_j$ be the order of $G_j$, 
where $o_j=0$ if $G_j$ is infinite.  Let $\Gamma \in B_1^H(G)$.  
We can write $\Gamma = \sum w_i$, where $w_i \in G$.  Since $G$ is not 
free, there may be many ways of writing each word $w_i$.  To simplify our argument, 
we cyclically rewrite each word $w_i$ so that each generator of a finite factor appears 
only with a positive power and so that $w_i$ is as short as possible.  This form is 
(cyclically) unique.  It is possible that some of the $w_i$ are contained 
in a single factor $G_j$; that is, they are powers of the generators.  
If a word $w_i$ is a power of a generator in a finite factor, 
we call it a \emph{finite abelian loop}.  They would complicate the search 
for a surface, but fortunately, we can ignore them, 
because in $B_1^H(G)$, if we let $c$ be the product of the 
finite orders $o_j$, then $\Gamma = c\Gamma/c$, and 
every finite abelian loop in $c\Gamma$ is trivial and can be removed. 
Consequently, we have the following observation, which we record as a 
lemma.

\begin{lemma}\label{lem:ignore_abelian_loops}
Let $\Gamma'$ be $\Gamma$ with the finite abelian loops removed.  
Then $\Gamma = \Gamma'$ in $B_1^H(G)$.  Therefore, 
$\scl(\Gamma') = \scl(\Gamma)$, and an extremal surface 
for $\Gamma$ can be produced from an extremal surface for $\Gamma'$.
\end{lemma}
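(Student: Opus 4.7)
My plan is to prove the abstract equality $\Gamma = \Gamma'$ in $B_1^H(G)$ directly from the relations defining that quotient, deduce the $\scl$ equality as an immediate corollary of the fact (recalled in Section~\ref{sec:top_min}) that $\scl$ descends to a well-defined function on $B_1^H(G)$, and then construct an extremal surface for $\Gamma$ by capping off the removed loops with disks in $K(G,1)$.

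For the algebraic identity, I would write $\Gamma = \Gamma' + \sum_{i \in I} w_i$, where $I$ indexes the finite abelian loops and each $w_i$ has the form $g_i^{k_i}$ with $g_i$ the generator of some finite factor $G_{j_i}$ of order $o_{j_i}$. Setting $c = \prod_{j : o_j > 0} o_j$, the defining relation $g^n = ng$ in $B_1^H(G)$ gives
\[
c \cdot w_i = c \cdot g_i^{k_i} = g_i^{c k_i},
\]
and since $o_{j_i} \mid c$ the right-hand side is the identity of $G$, which is itself zero in $B_1^H(G)$ (apply $g^n = ng$ with $n = 0$). Summing over $i \in I$ and scaling by $1/c$, which is legitimate because $B_1^H(G)$ is an $\R$-vector space, yields $c\Gamma = c\Gamma'$ and hence $\Gamma = \Gamma'$. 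Descent of $\scl$ then gives $\scl(\Gamma) = \scl(\Gamma')$ for free.

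For the extremal surface, I would take $(S',f')$ extremal for $\Gamma'$ with $n(S',f') = n$. Each removed loop $w_i = g_i^{k_i}$ has finite order $d_i = o_{j_i}/\gcd(k_i,o_{j_i})$ in $G$, so $w_i^{d_i}$ is null-homotopic in $K(G,1)$ and therefore bounds a disk $D_i$ whose boundary wraps $w_i$ with degree $d_i$. Choosing a common multiple $M$ of $n$ and all the $d_i$, I would form $S$ as the disjoint union of $M/n$ copies of $S'$ together with $M/d_i$ copies of each $D_i$. The result is admissible for $\Gamma$ with $n(S,f) = M$, and since $\chi^-(D^2) = 0$ the capping disks contribute nothing to $-\chi^-(S)$, so
\[
\frac{-\chi^-(S)}{2n(S,f)} = \frac{-\chi^-(S')}{2n} = \scl(\Gamma') = \scl(\Gamma),
\]
showing $S$ is extremal for $\Gamma$.

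The only step that requires a moment's care is the vanishing of the identity element in $B_1^H(G)$, together with the fact that the relation $g^n = ng$ is applied to arbitrary elements $g$ rather than merely generators; both are essentially formal. The remaining work is bookkeeping with degrees and Euler characteristics, which I do not expect to present any genuine obstacle.
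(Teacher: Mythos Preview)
Your proposal is correct and follows essentially the same route as the paper: the paper's justification (given in the paragraph immediately preceding the lemma) is exactly your ``multiply by $c = \prod o_j$ so that each finite abelian loop becomes trivial, then divide back'' argument, and the extremal-surface step of adjoining disk components for the removed loops is what the paper does in the proof of Theorem~\ref{thm:main_scylla}. Your write-up is simply more careful about the bookkeeping (the identity being zero in $B_1^H(G)$, matching degrees via a common multiple $M$), none of which the paper spells out.
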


As a result of Lemma~\ref{lem:ignore_abelian_loops}, we will always 
assume that $\Gamma$ has no finite abelian loops, although we will remark 
where this becomes important.  
We let $X_G$ be a $K(G,1)$ for our group.  For concreteness, 
set $X_G$ to be the standard presentation complex for $G$; that is, 
a rose with $2$ cells glued to powers of the generators of 
the finite factors.  We are going to build surface maps into $X_G$ 
by gluing together pieces as before.  However, we don't have 
Lemma~\ref{lem:fatgraphs_rep_surface_maps} on which to fall back 
to give us a nice combinatorial structure, so we need to 
build it from scratch, while incorporating the finite factors.  

In order to decompose surface maps, we need to introduce 
new combinatorial pieces.  Previously, we were given 
a surface map, we cut it into pieces, and we 
recorded the kinds of pieces we got.  
For this section, the decomposition is not so trivial, so 
we will first define what the pieces are and what the polyhedral 
structure is, and then we will go back and prove that it 
parametrizes surface maps into $X_G$.

Recall that an \emph{edge} is an ordered pair $e(x,y)$ of letters in $\Gamma$.  
Previously, we did not allow dummy edges; that is, those edges $e(x,y)$ 
such that $y$ cyclically follows $x$ in $\Gamma$.  For this section, 
we do allow them on the \emph{group teeth} defined below.  
However, the linear programming ignores them, and they remain not 
allowed in triangles and rectangles.

\subsection{Group polygons}

The main new combinatorial pieces are \emph{group polygons}.
\begin{figure}[ht]
\begin{center}
\labellist
\small
 \pinlabel {$a_{0,1}$} at 64 1
 \pinlabel {$e(a_{0,1},a_{0,0})$} at 122 3
 \pinlabel {$a_{0,0}$} at 120 40
 \pinlabel {$e(a_{0,0},a_{0,3})$} at 148 82
 \pinlabel {$a_{0,3}$} at 98 104
 \pinlabel {$e(a_{0,3},a_{0,1})$} at 63 128
 \pinlabel {$a_{0,1}$} at 28 104
 \pinlabel {$e(a_{0,1},a_{0,4})$} at -20 81
 \pinlabel {$a_{0,4}$} at 6 40
 \pinlabel {$e(a_{0,4},a_{0,1})$} at 4 2
\endlabellist
\includegraphics[scale=1.1]{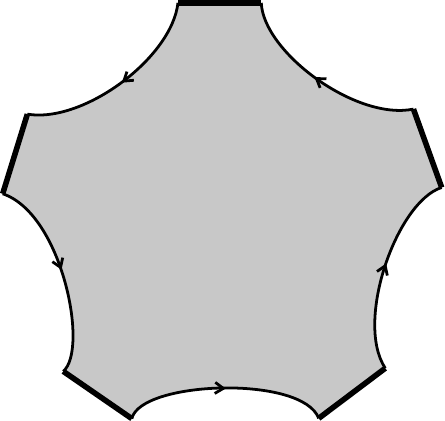}
\end{center}
\caption[A group polygon]
{A group polygon for the chain $\Gamma = aabaaB$ in the 
group $\Z/5\Z * \Z$ generated by $a$ and $b$.}
\label{fig:scylla:group_poly}
\end{figure}
A group polygon 
is associated with a finite free factor $G_j = \Z/o_j\Z$, and it 
has two kinds of sides, which alternate: labeled sides, each 
labeled by a letter in $\Gamma$ 
representing the generator of $G_j$, and \emph{edges}, 
which will serve the same purpose (being glued) as 
edges as in Section~\ref{sec:free_groups}.  
The edge on a group polygon between the labeled sides $x$ and $y$ is 
the edge $e(x,y)$.  Notice that dummy edges \emph{can} 
occur on a group polygon, and we will need to be careful 
about this later when we compute Euler characteristic.
A group polygon in $G_j$ contains exactly $o_j$ edges and $o_j$ labeled sides.  See 
Figure~\ref{fig:scylla:group_poly}.  Formally, 
a group polygon is a cyclic $2o_j$-tuple recording the sides and edges:
$(x_0, e(x_0, x_1), x_1, \ldots, x_{o_j-1}, e(x_{o_j-1}, x_0))$, 
where each $x_i$ is a letter in $\Gamma$ in $G_j$.

\subsection{Group teeth}

In order to imitate Section~\ref{sec:free_groups}, we should build 
surfaces out of polynomially-many types of pieces.  Unfortunately, 
since each group polygon has $o_j$ side labels,
the number of group polygons is exponential in the ranks of the finite factors, 
so we must break the group polygons into smaller pieces.  These 
are \emph{group teeth}.  See Figure~\ref{fig:group_teeth}.

A group tooth is associated with a finite free factor $G_j = \Z/o_j\Z$, and 
it is defined as a $4$-tuple $gt(x,y,n,z)$, where $x,y,z$ are letters in 
$\Gamma$ in the factor $G_j$ (i.e. $x,y,z$ are instances of the 
generator of the $j$th factor), and $n < o_j$.  
We require that if $n=0$, then $x=z$, and if $n=o_j-1$, then $y=z$.
The two labeled sides of a group tooth are labeled 
by $x$ and $y$, and the middle edge, which is an edge as above in the 
sense that it will be glued to other pieces, is $e(x,y)$.  We say that 
the group tooth is \emph{based at $z$}.  The name comes from the fact 
that a group tooth looks like a tooth on a bicycle sprocket.

\begin{figure}[ht]
\begin{center}
\labellist
\small
 \pinlabel {$a_{0,1}$} at 62 10
 \pinlabel {$e(a_{0,1},a_{0,0})$} at 122 10
 \pinlabel {$a_{0,0}$} at 123 50
 \pinlabel {$e(a_{0,0},a_{0,3})$} at 98 85
 \pinlabel {$a_{0,3}$} at 97 111
 \pinlabel {$e(a_{0,3},a_{0,1})$} at 64 136                                                                                                        
 \pinlabel {$a_{0,1}$} at 32 113                                                                                                              
 \pinlabel {$e(a_{0,1},a_{0,4})$} at 30 86                                                                                                                
 \pinlabel {$a_{0,4}$} at 6 51                                                                                                                
 \pinlabel {$e(a_{0,4},a_{0,1})$} at 2 12                                                                                                               
 \pinlabel {$0$} at 61 28                                                                                                               
 \pinlabel {$1$} at 99 54                                                                                                               
 \pinlabel {$2$} at 86 100                                                                                                              
 \pinlabel {$3$} at 42 102                                                                                                              
 \pinlabel {$4$} at 24 57                                                                                                               
 
 \pinlabel {$gt(a_{0,1},a_{0,0},0,a_{0,1})$} at 280 -4   
 \pinlabel {$gt(a_{0,0},a_{0,3},1,a_{0,1})$} at 265 83                                                                                                              
 \pinlabel {$gt(a_{0,3},a_{0,1},2,a_{0,1})$} at 238 140                                                                                                             
 \pinlabel {$gt(a_{0,1},a_{0,4},3,a_{0,1})$} at 160 110                                                                                                              
 \pinlabel {$gt(a_{0,4},a_{0,1},4,a_{0,1})$} at 174 -5     
\endlabellist
\includegraphics[scale=1.00]{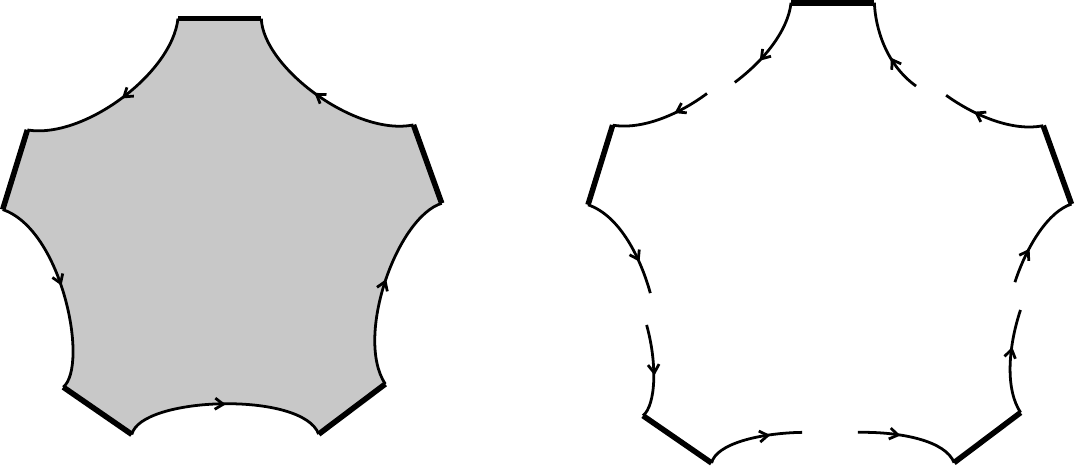}
\end{center}
\caption[Cutting a group polygon into group teeth]
{Cutting the group polygon from 
Figure~\ref{fig:scylla:group_poly} into group teeth.  
The bottom labeled side is arbitrarily chosen as the base.}
\label{fig:group_teeth}
\end{figure}

The group polygon 
$(x_0, e(x_0, x_1), x_1, \ldots, x_{o_j-1}, e(x_{o_j-1}, x_0))$ decomposes into 
the group teeth 
$gt(x_0, x_1, 0, x_0), gt(x_1, x_2, 1, x_0), \ldots, gt(x_{o_j-1}, x_0, o_j-1, x_0)$, 
where we have chosen $x_0$ as the base.
Group teeth are the pieces we get if we cut a group polygon 
in the middle of the labeled sides, and each piece records where it was and 
the label on the bottom (base) of the group polygon, but not the other labels.  
The base of the group polygon here is chosen arbitrarily; choosing a 
different labeled side to serve as the base gives a different 
decomposition of the group polygon into group teeth.  
Notice that there are at most $o_j|\Gamma|^3$ group teeth for the $j$th 
finite factor.

\subsection{The feasible polyhedron}

Using the same definitions from Section~\ref{sec:free_groups}, 
we define \emph{edges}, \emph{rectangles} and \emph{triangles} for $\Gamma$.  
Specifically, an edge is an ordered pair $e(x,y)$, where $x$ and $y$ are 
any letters in $\Gamma$.  A rectangle is an (unordered) pair $r(x,y)$, 
where $x,y$ are inverse letters in $\Gamma$ in the same infinite free factor.  
A triangle is $t(e_1, e_2, e_3)$, where the $e_i$ are edges satisfying the 
constraint that if $e_i = e(x,\Gamma_{i,j})$, then 
$e_{i+1} = e(\Gamma_{i,j-1}, y)$.
Triangles and rectangles are forbidden to have dummy edges, but group teeth 
can have them, for convenience.

We define a \emph{piece} to be a triangle, rectangle, or group tooth for $\Gamma$.
Let $\PP_\Gamma$ be the 
collection of all possible pieces.  We have $|\PP| \le (1 + \sum o_j)|\Gamma|^3 + |\Gamma|^2$.
Let $\EE_\Gamma$ be the collection of all non-dummy edges.  Let $V_\Gamma = \Q[\PP_\Gamma]$ and $E_\Gamma = \Q[\EE_\Gamma]$.  
There is a map $\partial: V_\Gamma \to E_\Gamma$ which is defined on generators by 
$\partial(r(x,y)) = e(x,y) + e(y,x)$, $\partial(t(e_1, e_2, e_3)) = e_1 + e_2 + e_3$, 
and $\partial( gt(x,y,n,z) ) = e(x,y)$.  
As before, we define 
$\iota: E_\Gamma \to E_\Gamma$ by 
$\iota(e(\Gamma_{i,j}, \Gamma_{k,l})) = -e(\Gamma_{k,l-1}, \Gamma_{i,j+1})$.
In the special case that 
the edge in a group tooth $gt$ is dummy, the boundary $\partial(gt)$ 
is defined to be $0$.

We need to make sure that the group teeth 
can be glued up into group polygons.  This requires more linear maps.  We define $\GG\EE_\Gamma$
to be the collection of tuples $(x,n,z)$, where $x,z$ are letters in $\Gamma$ 
in a finite factor $G_j$, and $n < o_j$.  
Let $GE_\Gamma = \Q[\GG\EE_\Gamma]$, and define $\partial_G :V_\Gamma \to GE_\Gamma$ 
on generators to be $0$ on triangles and rectangles, and set 
\[
\partial_G(gt(x,y,n,z)) = (y,(n+1)\bmod o_i,z) - (x,n,z).
\]
Given a collection $v\in V_\Gamma$ of group teeth, if $\partial_G(v)=0$, 
then the number of group teeth based at $z$ at position $n$ whose second 
labeled side is $x$ is the same as the number of group teeth based at $z$ 
at position $n+1$ whose first labeled side is $x$.  This restriction 
ensures that the group teeth can 
be glued up on their labeled sides to form group polygons.
The set of positive vectors in the subspace $\ker(\iota \circ \partial) \cap \ker \partial_G$ is a 
cone in $V_\Gamma$, which we denote by $C_\Gamma$.

To record the total degree of the boundary map for the surface maps 
that we will build, for each $w_i$, we define $N_i:V_\Gamma \to \Q$ 
which is $0$ on all triangles, $1$ on a rectangle $r(x,y)$ exactly when 
one of $x,y$ is the first letter of $w_i$, and $1$ on a group 
tooth $gt(x,y,n,z)$ exactly when $x$ is the first letter of $w_i$.  
The intersection of the cone $C_\Gamma$ with the affine subspace 
$\{v \in V_\Gamma \, |\, N_i(v) = 1\, \forall i\}$ is a polyhedron, 
which we call the \emph{admissible polyhedron} denote by $P_\Gamma$.

Finally, we want to compute Euler characteristic.  Define 
$\chi:V_\Gamma \to \Q$ on generators to be $0$ on rectangles, $-1/2$ on every 
triangle, and as follows on group teeth.    
For a group tooth $gt(x,y,n,z)$ in a finite factor $G_j$, we define
\[
\chi(gt(x,y,n,z)) = \left\{ \begin{array}{ll} 
         \frac{1}{o_j}       & \textnormal{ if $y$ follows $x$ cyclically in $\Gamma$} \\
         \frac{1}{o_j}-\frac{1}{2} & \textnormal{ otherwise }
         \end{array}\right.
\]
\begin{theorem}\label{thm:main_scylla}
In the above notation, $\scl(\Gamma) = \inf_{P_\Gamma}-\chi(v)/2$.  
Furthermore, an extremal surface for $\Gamma$ can be extracted from a 
minimizing vector in $P_\Gamma$.  The vector space $V_\Gamma$ has dimension 
at most $|\Gamma|^3(1+\sum_j o_j) + |\Gamma|^2$, and there are 
at most $|\Gamma|^2(1+\sum_j o_j)$ equality constraints which cut out $P_\Gamma$.
\end{theorem}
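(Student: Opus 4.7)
The strategy parallels that of Proposition~\ref{prop:free_groups}: I would prove two inequalities separately, via analogues of Lemmas~\ref{lem:free_groups_1} and~\ref{lem:free_groups_2}, and derive the existence of the extremal surface from the combination. The genuinely new difficulty is that we lack any analogue of Lemma~\ref{lem:fatgraphs_rep_surface_maps} to cite as a black box, so the combinatorial decomposition of a general admissible surface map must be constructed from scratch, and it must correctly account for the $2$-cells of $X_G$ glued along the relators $a_j^{o_j}$.

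For the direction $\scl(\Gamma) \ge \inf_{P_\Gamma} -\chi(v)/2$, I start with an admissible $(S,f)$ and first compress $S$ along any compressing disk, which only decreases $-\chi^-$. Then I would homotope $f$ to be transverse to the midpoints of the $1$-cells of $X_G$ and to the barycenters of the $2$-cells, and by further compression and minimality arguments arrange that the preimage of each $2$-cell (corresponding to a finite factor $G_j$) is a disjoint union of disks, each mapping as a degree-$1$ cover; each such disk then becomes a group polygon whose boundary reads $o_j$ letters of $\Gamma$ interleaved with edges. Preimages of midpoints of infinite-factor edges---after discarding inessential circles---give rectangles. What remains is a neighborhood of the preimage of the basepoint, a disjoint union of polygons, which I cut into triangles with no interior vertices as in Figure~\ref{fig:triangles}, and finally I would choose a base labeled side on each group polygon and subdivide it into group teeth. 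Recording piece counts gives a nonnegative integer vector $v \in V_\Gamma$ with $\iota\circ\partial(v) = 0$, $\partial_G(v) = 0$, and $N_i(v) = n(S,f)$, all by direct inspection. A CW-theoretic Euler characteristic count---in which each rectangle contributes $0$, each triangle $-1/2$, and each group polygon of factor $G_j$ (a disk, Euler characteristic $1$) is partitioned into $o_j$ teeth with an additional $-1/2$ absorbed into every non-dummy short edge, matching the triangle convention---then yields $\chi(v) = \chi^-(S)$, and rescaling by $1/n(S,f)$ lands in $P_\Gamma$ and gives the required bound.

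For the reverse direction $\scl(\Gamma) \le \inf_{P_\Gamma} -\chi(v)/2$, I would clear denominators to obtain an integral $kv$ representing a multiset of pieces. The constraint $\partial_G(kv) = 0$ ensures that the teeth assemble into complete group polygons in each finite factor, and $\iota \circ \partial(kv) = 0$ provides an $\iota$-matched partner for every non-dummy short edge. Gluing arbitrarily yields a $2$-complex $S$, a surface away from possible cone and branch singularities, each of which only decreases $-\chi^-$ when resolved. The map $f:S\to X_G$ sending each rectangle over its $1$-cell, each group polygon over its $2$-cell, and each triangle to the basepoint is admissible, with $n(S,f) = N_i(kv) = k$ by first-letter counting, giving $-\chi^-(S)/2n(S,f) \le -\chi(v)/2$. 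As in Proposition~\ref{prop:free_groups}, when $v$ is a minimizer this inequality is forced to be an equality, so the resulting surface has no branch points and is extremal. The dimension bounds follow directly from the definitions: $V_\Gamma$ is spanned by at most $|\Gamma|^2$ rectangles, $|\Gamma|^3$ triangles, and $o_j |\Gamma|^3$ group teeth per finite factor, while the equality constraints come from non-dummy edges (at most $|\Gamma|^2$) and group-edge tuples $(x,n,z)$ (at most $o_j|\Gamma|^2$ per factor).

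The hardest part is the first direction: justifying, via compression and minimality, that the preimage of each finite-factor $2$-cell may be taken to be a disjoint union of degree-$1$ disks, and then carrying out the Euler-characteristic bookkeeping so that contributions from group teeth (with the dummy-edge distinction built into the definition of $\chi$) agree with $\chi^-(S)$ no matter how the original surface was structured. Once that is handled, everything else is a direct translation of the free-group argument.
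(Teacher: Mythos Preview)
Your two-lemma skeleton, the vector-to-surface direction, the extraction of the extremal surface from a minimizer, and the dimension count all match the paper (Lemmas~\ref{lem:scylla_part_1} and~\ref{lem:scylla_part_2}).

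For the surface-to-vector direction you take a different route from the paper's main proof: you try to locate group polygons directly as preimages of the $2$-cells of $X_G$, whereas the paper cuts $S$ along $f^{-1}(*)$ into pieces $T_{j,k}$, each mapping into a single summand $K_j$, and then performs explicit surgery on each piece --- compress to a planar surface (using that $G_j$ is abelian), cut off strips to reduce to a single boundary component (again using abelianness to homotope the cut arcs down to $*$), and then pinch off group polygons (finite factor) or rectangles (infinite factor) one at a time. Your barycenter approach is essentially the alternative sketched in Remark~\ref{rem:proof_methods}, which the paper itself flags as ``not as simple to state rigorously.'' The gap in your version is the clause ``by further compression and minimality arguments arrange that the preimage of each $2$-cell \ldots\ is a disjoint union of disks, each mapping as a degree-$1$ cover.'' Transversality to the barycenter controls only a finite set of signed interior points; it says nothing about the full preimage of the closed $2$-cell, whose frontier will in general contain arcs in the interior of $S$ running over the loop $\alpha_j$, not just sides in $\partial S$ and edges mapping to $*$. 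Forcing each such region to be a disk with the alternating side/edge boundary pattern of a group polygon is exactly the content of the paper's strip-cutting and pinching moves, and is where the abelianness of each $G_j$ actually enters. ``Compression and minimality'' alone do not produce this; you need either the paper's explicit surgery on the $T_{j,k}$, or the honest version of Remark~\ref{rem:proof_methods}: puncture at barycenter preimages, map the punctured surface to the rose with boundary $\Gamma$ plus relator loops, run the free-group fatgraph argument on that enlarged chain, and then cap the relator boundaries with disks while tracking $\chi^-$ through the intermediate compression.
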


The proof of Theorem~\ref{thm:main_scylla} breaks into two 
inequalities.  We do the easy direction first.

\begin{lemma}\label{lem:scylla_part_1}
Given $v \in P_\Gamma$, there is a surface map $f:S \to X_G$ 
admissible for $\Gamma$ with $-\chi^-(S)/2n(S,f) \le -\chi(v)/2$.
\end{lemma}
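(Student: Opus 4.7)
The plan is to mirror the proof of Lemma~\ref{lem:free_groups_2}: clear denominators to obtain an integer vector, interpret it as a multiset of pieces, glue those pieces together to form an admissible surface map $f:S\to X_G$, and then compare Euler characteristics. The novel steps relative to the free group case are (i) assembling group teeth into group polygons before performing any edge gluings, and (ii) verifying the more elaborate Euler characteristic contributions on group teeth.

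First, I would choose $k\in\Z_{>0}$ with $kv$ integral and view $kv$ as a finite multiset of triangles, rectangles, and group teeth. For each finite factor $G_j$ and each base letter $z$ in $\Gamma$ lying in $G_j$, the hypothesis $\partial_G(v)=0$ says that for every $(y,m,z)$ the number of teeth of the form $gt(\cdot,y,m-1\bmod o_j,z)$ equals the number of the form $gt(y,\cdot,m,z)$; together with the built-in conditions $x=z$ when $n=0$ and $y=z$ when $n=o_j-1$, this lets me cyclically chain the teeth of each fixed base into complete group polygons. The remaining free edges --- those of rectangles and triangles together with the non-dummy middle edges of group teeth --- can then be glued in $\iota$-pairs, as allowed by $\iota\circ\partial(v)=0$. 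Dummy middle edges of group teeth stay on the boundary of the resulting surface $S$.

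Next, I would define $f:S\to X_G$ by sending each triangle to the basepoint, wrapping each rectangle once around the corresponding loop of $X_G$, and sending each group polygon to the $2$-cell of $X_G$ attached to the generator of the relevant finite factor so that the $o_j$ labeled sides wind once around that loop. Tracing $\partial S$ shows that the boundary is a disjoint union of copies of the $w_i$, the cyclic ordering in each $w_i$ being enforced by the edge gluings. Since $N_i$ counts first letters of $w_i$ on rectangles and group teeth, $n(S,f)=N_i(kv)=k$.

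The final step, and the one that carries the real weight, is the Euler characteristic accounting $-\chi^-(S)\le -\chi(kv)$; dividing by $2n(S,f)=2k$ then yields the desired inequality. The designed values of $\chi$ --- $0$ on a rectangle, $-1/2$ on a triangle, $1/o_j$ on a group tooth with dummy middle edge, and $1/o_j-1/2$ otherwise --- should emerge from a direct $V-E+F$ count: each group polygon contributes a disk of $\chi=1$ split as $1/o_j$ among its $o_j$ teeth, while each non-dummy middle edge of a tooth, being $\iota$-glued to a neighboring piece, contributes an extra $-1/2$ from the vertex it shares with that neighbor. Exactly as in Lemma~\ref{lem:free_groups_2}, branch points at interior polygon vertices only decrease $-\chi^-$, so this is the only concession needed. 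The main obstacle will be verifying this bookkeeping carefully, since it is the one place where the combinatorics genuinely differ from the free group case.
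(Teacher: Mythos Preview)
Your outline matches the paper's proof almost step for step: clear denominators, assemble group teeth into group polygons using $\partial_G=0$, glue the remaining non-dummy edges in $\iota$-pairs, define $f$ piecewise, read off $n(S,f)=k$ from $N_i$, and then do the Euler characteristic bookkeeping. The paper packages the last step slightly differently, using a spine $T$ (one vertex per rectangle, triangle, or group polygon; one edge per glued edge) and checking that a group polygon with $m$ non-dummy edges contributes $1-m/2$ both to $\chi(T)$ and to the linear functional $\chi$ on its constituent teeth; your $V-E+F$ sketch is equivalent.

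One point you pass over that the paper makes explicit: to get $-\chi^-(S)\le -\chi(kv)$ you need $\chi^-$ and $\chi$ to agree on the spine, i.e.\ no component of $S$ is a disk. A priori a group polygon with all dummy edges would be an isolated disk component contributing $+1$ to $\chi(kv)$ but $0$ to $-\chi^-(S)$, breaking the inequality. The paper invokes the standing assumption that $\Gamma$ is reduced and has no finite abelian loops to rule this out (a run of $o_j$ consecutive letters in $G_j$ would reduce). You should flag this when you carry out the bookkeeping; ``branch points only help'' is not the only concession needed here.
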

\begin{proof}
There is some $k \in \Z$ so that $kv$ is integral, and therefore 
represents a collection of pieces.  First, because 
$\partial_G(kv) = 0$, we can glue up the group teeth into 
group polygons, as follows.  Consider $z$ a letter in 
$\Gamma$ in finite factor $G_j$, and consider all the 
group teeth based at $z$ (of the form $gt(*,*,*,z)$).  For a given $0\le n<o_j-1$, 
and for any letter $s$, there are as many group teeth 
of the form $gt(*,s,n,z)$ as there are of the form $gt(s,*,n+1,z)$  
Therefore, we can glue them arbitrarily.  To glue 
two group teeth together means to undo the cutting shown 
in Figure~\ref{fig:group_teeth}, i.e. it means to 
identify the second labeled side of the first with the first 
labeled side of the second to obtain a string of $3$ labeled 
sides and two edges (alternating).  The result of the arbitrary gluing 
is a collection of strings of $o_j$ glued group teeth, with 
each end unglued. Now consider $n=o_j-1$.  
Recall that we required that a group tooth at index $0$  
be of the form $gt(z,*,0,z)$, i.e. begin with letter $z$, 
and we required that a group tooth at index $o_j-1$ be of the 
form $gt(*,z,o_i,z)$, i.e. end with letter $z$.  Therefore, 
every string of $o_j$ group teeth can be glued up into a loop 
of length $o_j$.  That is, every string can be glued up 
into a group polygon.  Doing this for every $z$ 
collects all the group teeth and glues them all up into group 
polygons, so we are left only with rectangles, triangles, and 
group polygons.  

Because $\iota\circ \partial(kv) = 0$, we can glue the rectangles, 
triangles, and group polygons along edges into a surface $S$.  
We ignore dummy edges, and they remain unglued, which is the 
correct behavior. 
There is a canonical map $f:S \to X_G$ which sends every triangle 
to the basepoint, every rectangle around the free factor loops, and 
every group polygon for factor $G_j$ to the appropriate 
$2$-cell.  By construction, $S$ is admissible for $\Gamma$, and 
$n(S,f) = k$.  

It is possible that there are branch points in the map $f$ which are 
produced by the triangle gluing.  However, there is a spine $T$ in 
$S$, with one vertex for every rectangle, 
triangle, and group polygon, and 
one edge for every glued edge.  Because $f$ can only 
introduce branch points, which only increases Euler characteristic, 
we have $-\chi^-(S) \le -\chi^-(T)$.  Now we show that $\chi(T) = \chi(kv)$.
Consider one of the group polygons $p$ in $S$ associated with 
finite factor $G_j$.  There will be 
some dummy edges on $p$, and some real edges glued to other 
pieces.  Let $m$ be the number of real edges.  
The contribution to Euler characteristic from $p$ is $1-m/2$.  
But by construction, the value of the linear function $\chi$ 
on the sum of the group teeth in $p$ will be 
$m(1/o_j - 1/2) + (o_j-m)(1/o_j) = 1-m/2$.  Similarly, 
the linear function $\chi$ correctly computes the contribution to 
Euler characteristic from the rectangles and triangles.
Therefore, $\chi(T) = \chi(kv)$.  Here we must be careful, since 
a priori, it is possible that $\chi^-(T) \ne \chi(T)$, 
because there might be disk components.  Recall, though, that we are assuming 
that $\Gamma$ is reduced and has no finite abelian loops, so in fact 
$\chi^-(T) = \chi(T) = \chi(kv)$, and we have
\[
\frac{-\chi^-(S)}{2n(S,f)} \le \frac{-\chi(kv)}{2k} = -\frac{1}{2}\chi(v)
\]
\end{proof}

\begin{lemma}\label{lem:scylla_part_2}
Given a surface map $f:S \to X_G$ which is admissible for $\Gamma$, 
there is a vector $v \in P_\Gamma$ with $-\chi(v)/2 \le -\chi^-(S)/2n(S,f)$.
\end{lemma}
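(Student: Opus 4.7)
The approach mirrors Lemma~\ref{lem:free_groups_1}: from $f:S\to X_G$ we extract an integer count vector $v\in V_\Gamma$ by decomposing a suitably normalized version $S'$ of $S$ into the combinatorial pieces defined above, then rescale by $1/n(S,f)$ to land in $P_\Gamma$. The new ingredient is a geometric normalization that produces group polygons out of the $2$-cells of $X_G$ coming from the finite factors.

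First I would put $f$ in general position with respect to the CW structure on $X_G$ and then perform simplifications that do not increase $-\chi^-(S)$, change $n(S,f)$, or alter the boundary image: compress $S$ along essential disks, and cut each component of $f^{-1}(D_j)$ along arcs so as to split any branched cover of higher degree into degree-one disk pieces. After this, every component of $f^{-1}(D_j)$ is an embedded disk mapping homeomorphically onto $D_j$, which is exactly a group polygon for $G_j$; some of its edges may be dummy edges, precisely when two consecutive labeled sides lie on $\partial S$.

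The closure $S_1$ of the complement of all these disks in $S'$ maps into $X_G^{(1)}$, which is a rose and hence a $K(F,1)$ for a free group $F$ on all generators. Applying Lemma~\ref{lem:fatgraphs_rep_surface_maps} to $S_1\to X_G^{(1)}$ produces a labeled fatgraph $Y$ with $-\chi(Y)\le -\chi^-(S_1)$ and the same boundary image, which I would decompose into rectangles and triangles as in Section~\ref{sec:surface_maps_into_free_groups}. For each group polygon I would choose a labeled side as base and cut into $o_j$ group teeth (Figure~\ref{fig:group_teeth}). Let $v\in V_\Gamma$ record the count of each piece type. The conditions $\iota\circ\partial(v)=0$ and $\partial_G(v)=0$ follow because non-dummy edges glue in $\iota$-matched pairs and the teeth reassemble into the original group polygons; $N_i(v)=n(S,f)$ follows as before by counting occurrences of the first letter of $w_i$ in the boundary.

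The function $\chi$ was defined so as to compute the Euler characteristic of the spine of the decomposition: a group polygon with $m$ non-dummy edges has spine contribution $1-m/2$, which equals the tooth sum $m(1/o_j-1/2)+(o_j-m)(1/o_j)$, while triangles and rectangles contribute $-1/2$ and $0$ as in the free-group case. Since $\Gamma$ has no finite abelian loops (Lemma~\ref{lem:ignore_abelian_loops}) and is reduced, the normalized surface $S'$ has no disk components, so $\chi(v)=\chi(S')=\chi^-(S')\ge \chi^-(S)$; dividing by $n(S,f)=N_i(v)$ places $v/n(S,f)$ in $P_\Gamma$ and gives $-\chi(v/n(S,f))/2\le -\chi^-(S)/2n(S,f)$, as required. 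I expect the main obstacle to be the geometric normalization: one must argue carefully that higher-degree branched covers of the $2$-cells $D_j$ can be cut into degree-one disks without inflating $-\chi^-$, and that this normalization does not introduce new disk components beyond those already excluded by the finite-abelian-loop hypothesis.
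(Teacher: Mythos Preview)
Your approach differs substantially from the paper's and in fact resembles the alternative method sketched in Remark~\ref{rem:proof_methods}. The paper does \emph{not} separate off preimages of the $2$-cells and invoke Lemma~\ref{lem:fatgraphs_rep_surface_maps} on the complement. Instead it cuts $S$ along the arcs of $f^{-1}(*)$, producing pieces $T_{j,k}$ each mapping into a single wedge summand $K_j$; the boundary of each $T_{j,k}$ already alternates between \emph{sides} (arcs of $\partial S$ labeled by $g_j^{\pm1}$) and \emph{edges} (arcs of $f^{-1}(*)$). Each $T_{j,k}$ is then compressed to a planar surface, cut down to one boundary component via a square-trick using that $G_j$ is abelian, and finally pinched into rectangles (infinite factor) or group polygons (finite factor), all without ever appealing to the fatgraph lemma.

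Your version has a genuine gap at the first step. A group polygon, by definition, has its $o_j$ labeled sides lying on $\partial S$ and carrying specific letters of $\Gamma$. But the preimage of the open $2$-cell $D_j$ lies entirely in the interior of $S$ (since $f(\partial S)$ lands in the $1$-skeleton); after your normalization you get disks whose boundaries are interior curves of $S$ wrapping $o_j$ times around the loop of $K_j$, carrying no $\Gamma$-letters at all. These are not group polygons. Passing to the complement $S_1$, its boundary is $\Gamma$ together with relator loops $g_j^{-o_j}$, so the fatgraph $Y$ you obtain will contain rectangles pairing each finite-factor letter of $\Gamma$ with a $g_j^{-1}$ on a relator boundary --- and such rectangles are not in $\PP_\Gamma$ (recall only infinite-factor rectangles are allowed, and $\Gamma$ has been rewritten with only positive powers in finite factors). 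To salvage the argument you must reglue each removed disk to its relator boundary in $Y$ and show that the disk together with its $o_j$ adjacent finite-factor rectangles assembles into a genuine group polygon with the correct edge labels, and that no stray triangles from the fatgraph vertices along the relator cycle spoil the $\chi$-count. This can be made to work and is what Remark~\ref{rem:proof_methods} is pointing at, but it is exactly the bookkeeping you have not supplied, and it is not simpler than the paper's direct approach via $f^{-1}(*)$.
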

\begin{remark}
This proof follows the initial strategy of~\cite{Calegari_sails}, 
but takes a different tack halfway through.
\end{remark}
\begin{proof}

Recall $G = *_j G_j$, where $G_j$ is finite or infinite 
cyclic; let $g_j$ be the generator of $G_j$, so $g_j$ has order $o_j$.  
Let $K_j$ be the wedge summand in $X_G$ corresponding 
to $G_j$, so $K_j$ is either a copy of $S^1$, if $G_j$ is infinite,  
or a copy of $S^1$ with a disk glued to it by an $o_j$-fold covering 
map on the boundary.  Denote the basepoint of $X_G$ by $*$.
There is a canonical choice of a 
map $\alpha_j:[0,1] \to X_G$ representing $g_j$ in $G$, which is 
simply the map positively traversing the copy of $S^1$ in $K_j$.

Recall we have written $\Gamma = \sum_i w_i$ in reduced form 
and so that only positive powers of the finite factor generators appear.
Formally, $\Gamma$ is a map $\Gamma : \coprod_i S_i^1 \to X_G$.  Separate 
the components into maps $\gamma_i:S_i^1 \to X_G$, where $\gamma_i$ 
represents $w_i$.  After homotopy, we take it that each $\gamma_i$ 
is a concatenation of the paths $\alpha_j$.  This induces 
an oriented simplicial structure on $S_i^1$, where the vertices 
are the preimage of $*$, and on each $1$-cell, $\gamma_i$ 
performs the map $\alpha_j$ (or possibly $\alpha_j^{-1}$, in the 
case of an infinite factor) for some $j$.  

After homotopy, we may assume that the map 
$\partial f: \partial S \to X_G$ factors through our chosen 
representative map $\Gamma = \coprod_i \gamma_i$, so 
there is a simplicial structure on $\partial S$.  
Now consider $f^{-1}(*)$ in $S$.  After homotopy (retaining the fact 
that $\partial f$ factors through $\Gamma$), we may assume that 
$f^{-1}(*)$ is a collection of essential loops and 
arcs in $S$.  We remove the loops by compressing $S$, 
so we assume that $f^{-1}(*)$ is a collection of essential 
arcs in $S$.  These arcs are compatible with the 
simplicial decomposition of $\partial S$, in the sense that every arc 
begins and ends on a vertex in $\partial S$.

We write $S = \cup_j S_j$, 
where $S_j$ is the collection of (closures of) components of 
$S \setminus f^{-1}(*)$ which map into $K_j$.  We write 
$S_j = \cup_k T_{j,k}$, where $T_{j,k}$ is the closure of a single 
component of $S \setminus f^{-1}(*)$.  Each $T_{j,k}$ 
comes with a map to $K_j$, and each $T_{j,k}$ has a simplicial 
structure on its boundary, where the vertices all map to 
$*$, and the $1$-cells are in $\partial S$ (mapping to 
$\alpha_j$), or
arcs in $f^{-1}(*)$.  We call the $1$-cells on $T_{j,k}$ which 
are arcs on $\partial S$ \emph{sides}, and 
we call $1$-cells which are arcs in the preimage $f^{-1}(*)$ \emph{edges}.  
Sides are part of $\partial S$, and edges get glued 
to other edges to produce $S$.
The $1$-cells need not 
alternate between edges and sides; for example, 
part of $\partial T_{j,k}$ might be two copies of $\alpha_j$ 
in a row, which yields two sides in a row.  
Whenever this occurs, blow up the vertex to produce 
a \emph{dummy edge} between these two sides.  Our 
usage of side, edge, and dummy edge coincides with the 
usage in the definitions of rectangles, triangles, and group teeth.
 
\begin{figure}[ht]
\begin{center}
\labellist
\small\hair2pt
 \pinlabel {$e_1$} at 241 239
 \pinlabel {$e_2$} at 170 184
 \pinlabel {$a_2$} at 225 199
 \pinlabel {$a_1$} at 185 228
\endlabellist
\includegraphics[scale=0.6]{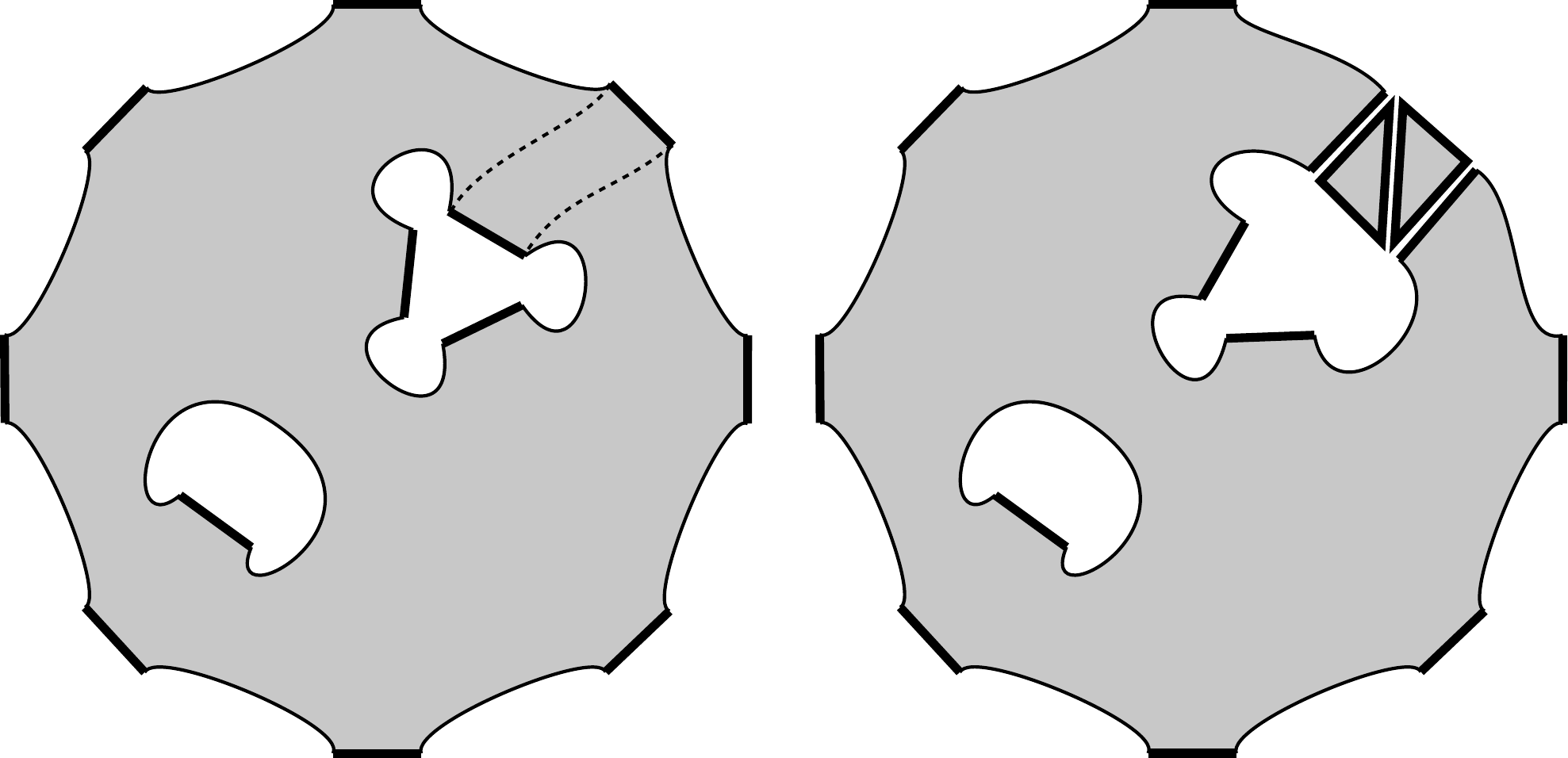}
\caption{Cutting the polygonal surface $T_{j,k}$ to reduce the 
number of boundary components.  Sides are thin, and edges are bold.}
\label{fig:cut_to_disk}
\end{center}
\end{figure}

We have decomposed $S$ into a union of components $T_{j,k}$, where 
each $T_{j,k}$ has boundary alternating between edges and sides.  
Now we will focus on each $T_{j,k}$ and simplify it.  
Denote a particular $T_{j,k}$ by $T$.  
If $T$ is not a planar surface, then since $G_j$ is 
abelian, we can compress it, so we may assume that 
$T$ is planar.  Our goal is to write $T$ as 
a union of triangles and (as applicable) rectangles or 
group polygons.  The first step is to rearrange $T$ 
to have a single simplicial boundary component.  
Suppose $T$ has more than one boundary component.  
Then pick edges $e_1$ and $e_2$ on different
boundary components.  Because $T$ is connected, 
there is a parallel pair of arcs $a_1$, and $a_2$ connecting the terminal 
vertex of $e_1$ to the initial vertex of $e_2$, and vice versa, such 
that $a_1$ and $a_2$ bound a strip in $T$.  
Cut $T$ along these arcs.  The result is a square with oriented 
boundary $(e_1, a_1, e_2, a_2)$, plus the remaining surface of $T$, 
which we again denote by $T$.  
Since we cut out a strip, the surface $T$ now has one fewer 
boundary component,
and $\partial T$ still has a simplicial structure alternating between 
edges and sides, 
where the cut arcs $a_1$ and $a_2$ have become edges.  We must be careful, though:
a priori, the arcs $a_1$ and $a_2$ need not be nullhomotopic, 
so we cannot trivially replace them on the boundary by edges, which by definition 
map to the basepoint.  The key is that $G_j$ is abelian, so this homotopy 
actually \emph{is} possible.
Similarly, we may assume that all four sides of 
the square are edges, so we can cut this square into 
two triangles.  In the special case that one of the edges $e_1$ or $e_2$ 
is a dummy edge, then the square simplifies to a triangle, and 
if $e_1$ and $e_2$ are both dummy edges, the square can be removed 
completely leaving just a normal gluing edge that we have cut.  
Again, the remaining surface $T$ has one fewer 
boundary component, and it has the same simplicial structure
on its boundary alternating between edges and sides.  Furthermore, 
the remaining $T$ plus the square presents the same set of edges; in other words, 
we haven't changed how the piece $T$ interacts with the other pieces $T_{j,k}$.
After finitely many cuts, then, we may assume that $T$ has a single boundary 
component.  See Figure~\ref{fig:cut_to_disk}.

\begin{figure}[ht]
\begin{center}
\labellist
\small\hair 2pt
 \pinlabel {$\alpha_j$} at 134 399
 \pinlabel {$\alpha_j^{-1}$} at 60 402
 \pinlabel {$\alpha_j$} at 316 386
 \pinlabel {$\alpha_j^{-1}$} at 267 387
 
 \pinlabel {$\alpha_j$} at 29 172
 \pinlabel {$\alpha_j$} at -2 114
 \pinlabel {$\alpha_j$} at 16 45
 \pinlabel {$\alpha_j$} at 73 4
 \pinlabel {$\alpha_j$} at 141 7
 \pinlabel {$\alpha_j$} at 196 59
 \pinlabel {$\alpha_j$} at 198 127
 \pinlabel {$\alpha_j$} at 158 183
 \pinlabel {$\alpha_j$} at 89 201

 \pinlabel {$\alpha_j$} at 218 198
 \pinlabel {$\alpha_j$} at 222 143
 \pinlabel {$\alpha_j$} at 271 173
 
 \pinlabel {$\alpha_j$} at 249 88
 \pinlabel {$\alpha_j$} at 253 30
 \pinlabel {$\alpha_j$} at 312 0
 \pinlabel {$\alpha_j$} at 367 35
 \pinlabel {$\alpha_j$} at 361 97
 \pinlabel {$\alpha_j$} at 301 125
\endlabellist
\includegraphics[scale=0.6]{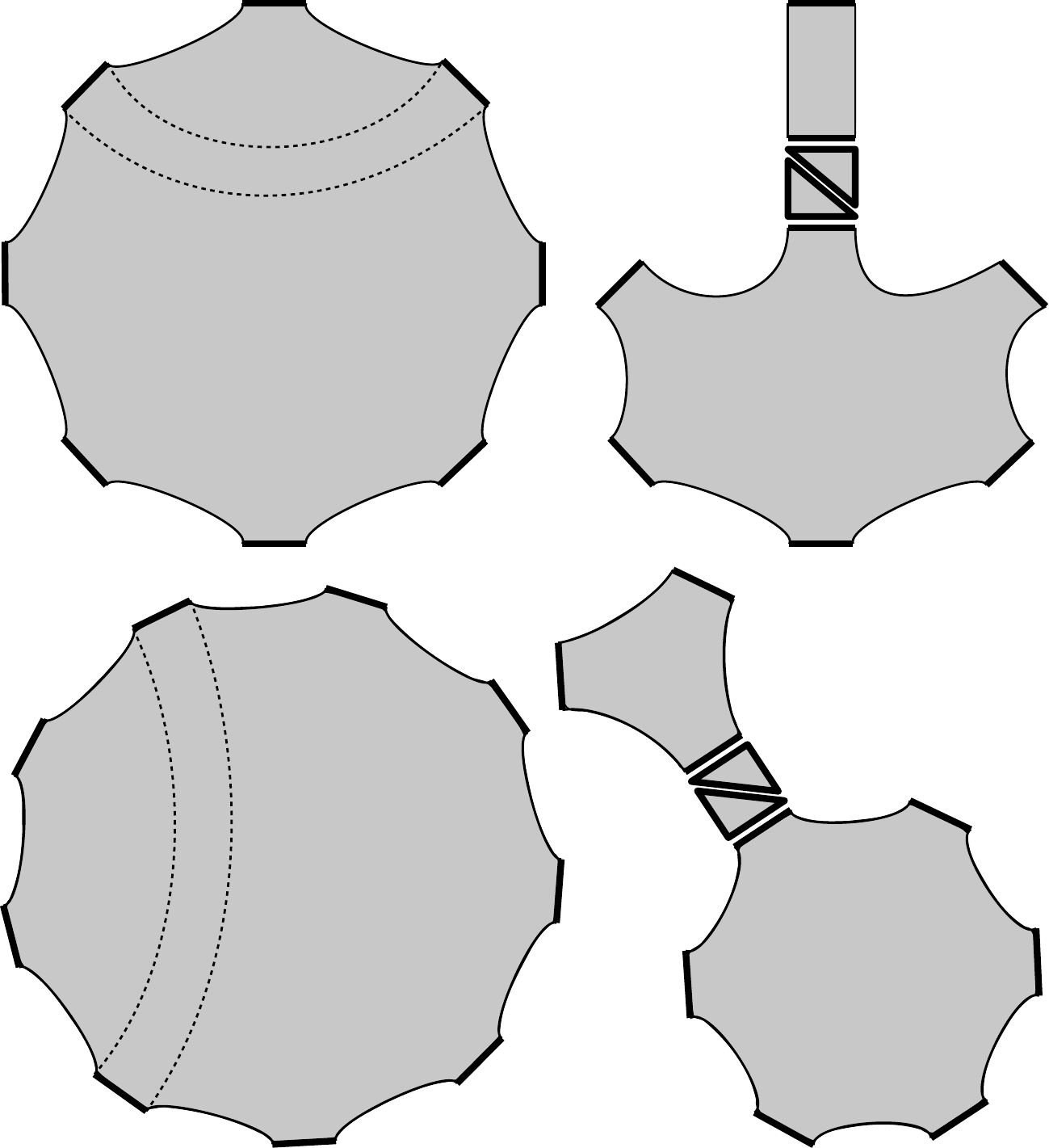}
\caption{Reducing the size of the boundary of $T_{j,k}$ 
by pinching off a rectangle in the case of an infinite factor, top, 
or a group polygon in the case of a finite factor, bottom.  The 
finite factor here is $G_j = \Z/3\Z$.}
\label{fig:pinch}
\end{center}
\end{figure}

In the case that $G_j$ is finite, 
every side of $T$ must be labeled with the same, positive, 
arc $\alpha_j$.  Furthermore, in order for 
$T$ to map into $K_j$, we note that the number of sides 
of $T$ must be a multiple of $o_j$.  Number the edges of $T$ 
$e_0, \ldots, e_n$.  Perform the same square-cutting 
move on the edges $e_0$ and $e_{o_j}$.  This cuts $T$ into 
two disk components, one of which has exactly $o_j$ sides, and 
one of which has $n-o_j$ sides.  Note that the former component 
is exactly a group polygon, and both components still have 
boundary alternating between edges and sides, so we can repeat 
this procedure on the larger component recursively.  
The result is that we have decomposed 
$T$ into a union of triangles and group polygons.  

If $G_j$ is infinite cyclic, then each side of $T$ is a copy of 
$\alpha_j$ or $\alpha_j^{-1}$.  There must be some edge 
which lies between sides labeled $\alpha_j$ and $\alpha_j^{-1}$.  
Pinch these two sides together to form a rectangle, 
and cut it off $T$.  Where we cut, it is possible that 
$T$ has several adjacent edges.  Using triangles, we can reduce these 
to a single edge.  This produces a new $T$ with two fewer 
sides.  Repeating this procedure decomposes $T$ into a union of 
rectangles and triangles.  Another way to see this is to reduce $\partial T$, 
thought of as a cyclic word.  The boundary $\partial T$ must be trivial, 
so it pinches together to form a tree, which has an obvious structure as a 
union of triangles and rectangles.  See Figure~\ref{fig:pinch}.

For both decompositions, we never changed the topological type of $S$; 
we simply homotoped $f$ to give the pieces of $S$ a combinatorial structure.  
There is a combinatorial spine $L$ to which $S$ deformation retracts, given by 
a vertex for every piece (rectangle, triangle, or group polygon), and 
an edge for every gluing edge.  Therefore, $\chi(S) = \chi(L)$.  Again 
we use the assumption that there are no finite abelian loops in $\Gamma$, so 
$\chi^-(S) = \chi(S) = \chi(L)$.  Now 
cut every group polygon into group teeth, and let $v \in V_\Gamma$ be 
the vector which records how many of each type if piece we have.  Clearly, 
$v \in C_\Gamma$, since we obtained $v$ by cutting a surface, and 
$N_i(v) = n(S,f)$ for all $i$, and $\chi(v) = \chi(L)$.  
Then $(1/N_i(v))v \in P_\Gamma$, and 
\[
-\frac{1}{2}\chi\left(\frac{1}{N_i(v)}v\right) 
 = -\frac{1}{2n(S,f)}\chi(L) \le \frac{-\chi^-(S)}{2n(S,f)}
\]
\end{proof}
\begin{remark}\label{rem:proof_methods}
There are (at least) two different ways to prove Lemma~\ref{lem:scylla_part_2}. 
One is to follow our strategy of cutting at the preimage of the basepoint and 
rearranging the resulting surface pieces.  This method follows 
the strategy of \cite{Calegari_sails}.  The other is to 
remove the preimage of small neighborhoods of central points in the $2$ 
cells in $X_G$.  The resulting surface maps into a free group, and it is 
straightforward to see that it has boundary $\Gamma$, plus some
relators.  We get a fatgraph representative, and then 
glue back in disks for each relator while preserving a combinatorial 
structure.  This latter method is not as simple to state rigorously, 
which is why we used the former for our proof, but it does 
generalize more readily to groups with more complicated presentations.
See Section~\ref{sec:extensions}.
\end{remark}

\begin{remark}
Lemma~\ref{lem:scylla_part_2} gives an independent proof 
that surface maps into free groups factor through labeled 
fatgraph maps (possibly after compression); i.e. 
Lemma~\ref{lem:fatgraphs_rep_surface_maps}.
\end{remark}

\begin{proof}[Proof of Theorem~\ref{thm:main_scylla}]
Theorem~\ref{thm:main_scylla} is immediate from Lemmas~\ref{lem:scylla_part_1} 
and~\ref{lem:scylla_part_2}, plus the same observation from the 
proof of Proposition~\ref{prop:free_groups}; namely, that 
gluing a minimizing vector for $-\chi/2$ in $P_\Gamma$ 
produces an extremal surface.  Similarly, this 
surface will have no branch points.  If $\Gamma$ 
has abelian loops, then we must add some disk components to 
this extremal surface.  To make the combinatorial 
structure complete, we can tile these disks with group polygons.

The size of the linear programming problem is obviously linear 
in the input size, the number of factors, and the orders of the finite factors, 
because there are only polynomially-many types of pieces. 
However,  we rewrote the input word 
using only positive powers of the generators, so we increased the 
input size before running the algorithm.  This is still polynomial; 
however, in practice, and to achieve the complexity that we asserted, 
we reduce the input words completely (allowing inverses of the generators 
of the finite factors), and we build surfaces out of a wider class of pieces.  
Namely, we must allow group teeth for positive and negative finite factor letters, 
and we allow rectangles in the finite factors.  The only modification 
necessary in the proofs of Lemmas~\ref{lem:scylla_part_1} 
and~\ref{lem:scylla_part_2} is that in the reduction of the finite-factor 
disks $T_{j,k}$, we must first pinch off rectangles as in the infinite-factor disks, 
and then pinch off group polygons.
This keeps the computational 
complexity linear in the orders of the finite factors.
\end{proof}

\section{Experimental and theoretical applications}
\label{sec:extensions}

\subsection{Examples}

The \texttt{scylla} surface decomposition gives a 
combinatorial structure to surface maps.  
Figure~\ref{fig:scylla_surfaces} shows two examples of surface maps 
into $\Z/3\Z * \Z/2\Z \cong \PSL(2,\Z)$.  

\begin{figure}[ht]
\begin{center}
\labellist
\small\hair2pt
 \pinlabel {$a_{0,0}$} at -6 34
 \pinlabel {$b_{0,1}$} at 147 -10
 \pinlabel {$a_{0,0}$} at 308 45
 \pinlabel {$b_{0,1}$} at 149 162
 \pinlabel {$a_{0,0}$} at 77 46
 \pinlabel {$b_{0,1}$} at 146 34
 \pinlabel {$a_{0,0}$} at 220 45
 \pinlabel {$b_{0,1}$} at 148 54
 \pinlabel {$a_{0,0}$} at 77 111
 \pinlabel {$b_{0,1}$} at 142 98
 \pinlabel {$a_{0,0}$} at 222 109
 \pinlabel {$b_{0,1}$} at 147 117
 
 \pinlabel {$a_{0,0}$} at 408 18
 \pinlabel {$a_{0,1}$} at 482 18
 \pinlabel {$a_{0,3}$} at 444 65
 \pinlabel {$a_{0,0}$} at 408 107
 \pinlabel {$a_{0,1}$} at 482 106
 \pinlabel {$a_{0,3}$} at 439 154
 \pinlabel {$b_{0,4}$} at 323 96
 \pinlabel {$b_{0,4}$} at 370 92
 \pinlabel {$b_{0,2}$} at 518 87
 \pinlabel {$b_{0,2}$} at 563 110
\endlabellist
\includegraphics[scale=0.55]{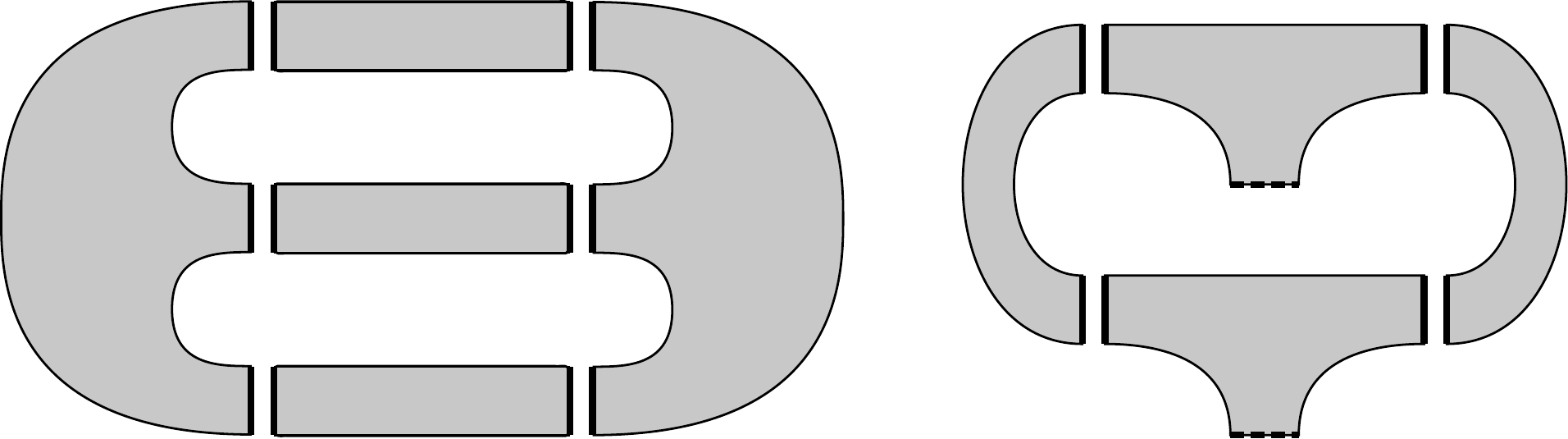}
\end{center}
\caption[Examples of \texttt{scylla} output surfaces.]
{Surfaces bounding the chains $ab$ 
and $aabab$ in the group $G = \Z/3\Z *\Z/2\Z$.  
These exhibit $\scl_G(ab) \le 1/12$ and $\scl_G(aabab) = 0$.  
In fact, they are both extremal.  The dashed edges are dummy edges.  
Note the group polygons in $\Z/2\Z$ look like rectangles, 
but they aren't.}
\label{fig:scylla_surfaces}
\end{figure}

\subsection{Relationship to \texorpdfstring{$\scl$}{scl} in free groups}

The groups $\Z/n\Z *\Z/m\Z$ approximate $\Z*\Z$ locally 
as $n$ and $m$ get large, since balls of a fixed radius in the Cayley 
graph are eventually the same.  However, the groups are certainly 
different, and if $\Gamma \in B_1^H(\Z*\Z)$, then it's not 
immediately clear if there is a relationship between 
$\scl_{\Z*\Z}(\Gamma)$ and $\scl_{\Z/n\Z*\Z/m\Z}(\Gamma)$.  
In fact, if $\Gamma$ is fixed, then taking one of the finite-factor 
generators to have very large order causes $\scl$ to behave 
as though the generator has infinite order.  There are 
different ways to state this fact; for simplicity, we 
state the version which takes all the orders to infinity.

\begin{proposition}\label{prop:finite_approx}
Let $H_j = \langle a_j \rangle = \Z$ and 
$G_j = \langle a_j \rangle / \langle a_j^{o_j} \rangle = \Z/o_j\Z$.   
Let $H = *_j H_j$ and $G = *_j G_j$.
If $\Gamma \in B_1^H(H)$, let $|\Gamma|_j$ denote the 
number of letters in $\Gamma$ from $H_j$.  We have 
\[
\scl_G(\Gamma) \le \scl_H(\Gamma) \le \scl_G(\Gamma) + \sum_j\frac{|\Gamma|_j}{2o_j}
\]
\end{proposition}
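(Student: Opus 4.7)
The plan is to prove the two inequalities separately. The first inequality $\scl_G(\Gamma) \le \scl_H(\Gamma)$ should be immediate from functoriality: the quotient homomorphism $H \twoheadrightarrow G$ is realized on presentation complexes by the inclusion $X_H \hookrightarrow X_G$ obtained by attaching relator $2$-cells for $a_j^{o_j}$, so any admissible surface $(S, f_H)$ for $\Gamma$ in $X_H$ composes with this inclusion to give an admissible surface in $X_G$ of identical $-\chi^-$ and boundary degree.

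For the second inequality, my plan is to start from a near-extremal surface in $G$, excise the preimages of the $2$-cell centers, and reconnect the resulting punctures with tubes in $X_H$. Fix $\varepsilon > 0$ and, by Lemma~\ref{lem:scylla_part_1}, take an admissible surface $(S, f_G)$ for $\Gamma$ in $X_G$ whose decomposition into rectangles, triangles, and group polygons (coming from a rational vector in $P_\Gamma$) satisfies $-\chi^-(S)/(2n) \le \scl_G(\Gamma) + \varepsilon$, where $n = n(S,f_G)$. For each finite factor $G_j$, let $Q_j^+$ (respectively $Q_j^-$) denote the number of group polygons whose labeled sides all read $a_j$ (respectively $a_j^{-1}$), and let $R_j$ be the number of $G_j$-rectangles. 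Each letter of $\Gamma$ contributes $n$ labeled sides to $\partial S$, which gives
\[
Q_j^+ o_j + R_j = n|\Gamma|_{j,+}, \qquad Q_j^- o_j + R_j = n|\Gamma|_{j,-}.
\]
The hypothesis $\Gamma \in B_1^H(H)$ forces the signed $a_j$-exponent sum to vanish, so $|\Gamma|_{j,+} = |\Gamma|_{j,-}$; subtracting the two equations gives $Q_j^+ = Q_j^-$, and adding them gives the key arithmetic bound $Q_j := Q_j^+ + Q_j^- \le n|\Gamma|_j / o_j$.

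Next I carry out the surgery. Homotope $f_G$ so that it is transverse to the center $c_j$ of the $j$-th $2$-cell; each positive (respectively negative) group polygon then contributes exactly one preimage of $c_j$ of local degree $+1$ (respectively $-1$). Remove a small open disk around each such preimage to obtain $S_0$, which maps into $X_G$ minus the $2$-cell centers, a space that deformation retracts onto $X_H$. The new boundary circles map to $a_j^{\pm o_j}$; since $Q_j^+ = Q_j^-$, I pair them within each factor in $(+,-)$ pairs and attach an annulus to each pair via a degree-$o_j$ map onto the $a_j$-loop of $X_H$, with matching boundary orientations. The resulting surface $S_H$ is admissible for $\Gamma$ in $X_H$ with $n(S_H, f_H) = n$, and a direct Euler characteristic count gives $\chi(S_H) = \chi(S) - \sum_j Q_j$ (each removed disk contributes $-1$, each attached tube contributes $0$). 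Combining this with the bound on $Q_j$ and sending $\varepsilon \to 0$ yields $\scl_H(\Gamma) \le \scl_G(\Gamma) + \sum_j |\Gamma|_j/(2o_j)$.

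The main technical obstacle will be verifying that $-\chi^-(S_H) = -\chi(S_H)$, so that the Euler characteristic computation really controls $\scl_H$. This reduces to showing that the construction does not introduce disk components: $S$ has none by the no-finite-abelian-loop assumption of Lemma~\ref{lem:ignore_abelian_loops}; puncturing a non-disk surface cannot split off a disk component; and tubing two boundary circles either preserves the Euler characteristic of the ambient component (if both circles belonged to it) or merges two components into one of nonpositive Euler characteristic, so no disk appears in $S_H$.
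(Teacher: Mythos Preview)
Your proof is correct and follows essentially the same strategy as the paper: puncture the group polygons of a combinatorial extremal (or near-extremal) surface in $G$, pair the resulting $a_j^{\pm o_j}$ boundaries, and tube them to obtain an admissible surface in $H$, using the bound $Q_j \le n|\Gamma|_j/o_j$ on the number of group polygons. The only cosmetic differences are that the paper uses an actual extremal surface (whose existence is guaranteed by Theorem~\ref{thm:main_scylla}) rather than an $\varepsilon$-approximation, and deduces $Q_j^+ = Q_j^-$ from the homological triviality of $\partial S'$ in $H$ rather than from your explicit side count; your treatment of the $\chi^- = \chi$ issue is somewhat more detailed than the paper's.
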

\begin{proof}
The inequality on the left is immediate, since $G$ is a quotient of $H$, so it 
remains to bound $\scl_H(\Gamma)$ in terms of $\scl_G(\Gamma)$.  Let 
$S$ be an extremal surface for $\Gamma$ in $G$ mapping with degree $N$, 
so $\partial S = N\Gamma$.  By the 
proof of Theorem~\ref{thm:main_scylla}, we can put $S$ into 
the combinatorial \texttt{scylla} form.  Recall in this form, 
we cyclically reduce $\Gamma$ completely, and we allow group polygons 
labeled with positive and negative powers of generators.  If $\Gamma$ 
has abelian loops, $S$ will have disk components decomposed into 
group polygons.  Let there be $K_j$ total group polygons in $S$ 
in factor $G_j$.  Remove a 
small neighborhood of a central point in every group polygon in $S$ to 
obtain a surface $S'$.  The surface $S'$ decomposes into 
triangles and rectangles (each group polygon in $G_j$ being replaced by 
$o_j$ rectangles), so $S'$ comes with a map to $H$.  
The boundary of $S'$ decomposes into 
$N\Gamma + \sum_j m_ja_j$ in $B_1^H(H)$.  However, since both 
$N\Gamma$ and $\partial S'$ are homologically trivial in $H$, 
we must have $m_j=0$ for all $j$.  In other words, there must be exactly as 
many positive group polygons in $G_j$ as there are negative 
group polygons in $G_j$.  Therefore, we can pair them up and glue on annuli 
to obtain a surface $S''$ with a map to $H$ such that $\partial S'' = N\Gamma$.  
Gluing the annuli does not affect Euler characteristic, but removing the neighborhoods 
of the central points in the group polygons does.  We have 
\[
-\chi^-(S'') \le -\chi^-(S) + \sum_jK_j.
\]
This would be an equality, except removing a central point from 
a group polygon which is a disk component of the surface does not affect 
$\chi^-$.
Every group polygon in factor $G_j$ contains $o_j$ letters from factor $G_j$ 
in $\Gamma$.  Therefore, $K_j \le N|\Gamma|_j/o_j$, so
\[
\scl_H(\Gamma) \le \frac{-\chi^-(S'')}{2N} 
               \le \frac{-\chi^-(S)}{2N} + \sum_j\frac{N|\Gamma|_j}{2No_j} 
               = \scl_G(\Gamma) + \sum_j\frac{|\Gamma|_j}{2o_j} 
\]
\end{proof}
\begin{remark}
In any small cancellation group, we can control the number of relators in 
a surface in terms of $N|\Gamma|$ and the small cancellation constant.  
For free products of cyclic groups, we can glue annuli to remove the 
newly created boundary 
components in $S'$.  In a general small cancellation group, there is no guarantee 
that there are the same number of a relator and its inverse, so filling in the 
boundary could add Euler characteristic proportional to the small 
cancellation constant, so this argument will not work.
If we require that the relators be linearly independent in homology, 
though, this will force the relators to match up, and the analogue of 
Proposition~\ref{prop:finite_approx} should be true.
\end{remark}

As a corollary, we can compute $\scl([a,b])$ when only $a$ has 
finite order.
\begin{corollary}\label{cor:abAB_formula}
Let $G = \langle a \rangle / \langle a^o \rangle * \langle b \rangle$.  
Then $\scl_G([a,b]) = 1/2 - 1/o$.
\end{corollary}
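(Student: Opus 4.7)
The plan is to establish the equality by matching bounds. For the lower bound, I apply Proposition~\ref{prop:finite_approx}: although stated when every factor of the quotient is finite, its proof goes through verbatim when one factor remains infinite, since group polygons (and hence the terms $|\Gamma|_j/(2o_j)$) arise only from the finite factors. Taking $H = \langle a\rangle * \langle b\rangle \cong F_2$, this gives
\[
\tfrac{1}{2} \;=\; \scl_{F_2}([a,b]) \;\le\; \scl_G([a,b]) + \frac{|[a,b]|_a}{2o} \;=\; \scl_G([a,b]) + \frac{1}{o},
\]
since $[a,b]=abAB$ contributes two letters to the $a$-factor. This yields $\scl_G([a,b]) \ge 1/2 - 1/o$.

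For the matching upper bound I will construct an admissible surface for $o\cdot [a,b]$ with Euler characteristic $2-o$. Start with the once-punctured torus $T$, $\pi_1(T)=F_2$, $\partial T=[a,b]$, mapping to $X_G$ via the inclusion $F_2 \hookrightarrow G$. Pass to the $o$-fold cyclic cover $\widetilde T \to T$ corresponding to the surjection $F_2 \twoheadrightarrow \Z/o\Z$ sending $a\mapsto 1$, $b\mapsto 0$. Since $[a,b]$ lies in the kernel of this surjection, $\widetilde T$ has $o$ boundary components, each freely homotopic to $[a,b]$, with $\chi(\widetilde T)=-o$ and genus one. The loop $a^o$ in $T$ lifts to an essential simple closed curve $\alpha$ in $\widetilde T$. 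Cutting $\widetilde T$ along $\alpha$ produces a surface of the same Euler characteristic with $o+2$ boundary components; two of these are copies of $\alpha$ with opposite induced orientations and thus map to $a^{\pm o}$ in $X_G$. Since $a^o=1$ in $G$, I cap each of these two new boundaries with the $2$-cell of $X_G$ attached along $a^o$. The resulting admissible surface $S$ satisfies $\partial S = o\cdot [a,b]$ and $\chi(S)=-o+2$, giving
\[
\scl_G([a,b]) \;\le\; \frac{-\chi(S)}{2o} \;=\; \frac{o-2}{2o} \;=\; \tfrac{1}{2}-\tfrac{1}{o}.
\]

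Combining the two inequalities gives the equality. The one genuinely subtle step is verifying that $\alpha$ is non-separating, so that cutting along it produces a planar surface with exactly $o+2$ boundary components rather than splitting off a disk component (which would negate the Euler-characteristic savings). This is a standard covering-space computation: a Schreier basis for $\pi_1(\widetilde T)\le F_2$ with coset representatives $\{1,a,a^2,\ldots,a^{o-1}\}$ contains $a^o$ as one of its free generators, so $[\alpha]$ is primitive in $H_1(\widetilde T)\cong\Z^{o+1}$ and $\alpha$ is therefore non-separating.
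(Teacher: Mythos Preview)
Your proof is correct. The lower-bound argument via Proposition~\ref{prop:finite_approx} is exactly what the paper does, including the implicit observation that the proposition applies when some factors stay infinite.

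For the upper bound you take a genuinely different route. The paper builds the extremal surface directly out of \texttt{scylla} pieces: two group $o$-gons (one with all sides $a$, one with all sides $A$) joined by $o$ rectangles in the $b$-factor; the spine has $o+2$ vertices and $2o$ edges, giving $\chi = 2-o$ immediately. Your construction is purely topological: pull back the once-punctured torus along the $\Z/o\Z$-cover, cut along the lift of $a$, and cap the two new boundaries with the relator $2$-cell. The two constructions in fact produce the same planar surface, but yours does not rely on any of the combinatorial machinery developed in the paper, which is a nice feature---it would work equally well for someone who had never heard of group polygons. Conversely, the paper's construction makes the equality case in Proposition~\ref{prop:finite_approx} visibly sharp (exactly two group polygons, each contributing $1/o$), and it generalizes more transparently to other words where one wants to read off an extremal surface from a linear-programming vertex.

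Your care about $\alpha$ being non-separating is warranted and the Schreier-basis check is clean; without it one could in principle cap off a sphere or disk component and lose the $+2$ in $-\chi^-$.
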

\begin{proof}
Build an admissible surface for $[a,b]$ using two group polygons, one 
with $a$ and one with $A$.  Cyclically order the edges on these polygons in 
opposite directions, and connect matching edges with group rectangles for $b$.  
The resulting surface $S$ is admissible for 
$[a,b]$, and has boundary which is $o$ copies of $[a,b]$, so $n(S) = o$.  
Directly computing $\chi(S)$, we get
\[
\frac{-\chi^-(S)}{2o} = \frac{o-2}{2o} = \frac{1}{2} - \frac{1}{o}
\]
Since $\scl_{\Z*\Z}([a,b]) = 1/2$, the right hand inequality from 
Proposition~\ref{prop:finite_approx} is an equality, and 
this surface is extremal.
\end{proof}

\subsection{Experiments}

Stable commutator length has connections to number theory, and it
often behaves in a \emph{quasipolynomial} way.  A function 
$f:\Z \to \R$ is a quasipolynomial if there exists $D \in \N$, 
called the \emph{period} of $f$, and polynomials $p_0, \ldots, p_{D-1}$ 
so that $f(n) = p_{n\%D}(n)$.  Here $n\%D$ denotes the integer remainder.  
Another way to think of a quasipolynomial is as a polynomial whose 
coefficients depend on the residue class of the input.  If a quasipolynomial 
has degree $1$ (for all residue classes), we call it a 
\emph{quasilinear} function.  See~\cite{Calegari_sails} 
and~\cite{CW_quasipoly}.

\begin{conjecture}\label{conj:main_conj}
Let $G_j^k = \langle a_j \rangle / \langle a_j^{o_{j,k}} \rangle = \Z/o_{j,k}\Z$ 
and $G_k = *_j G_j^k$.  Let $\Gamma$ be fixed so that $\Gamma \in B_1^H(G_k)$ for all $k$ 
(this only requires that $\Gamma$ have the appropriate number of inverse pairs in 
the free factors).  Then for $o_{j,k}$ sufficiently large, $\scl_{G_k}(\Gamma)$ 
is piecewise-quasilinear in the variables $1/o_{j,k}$.
\end{conjecture}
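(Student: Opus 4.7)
The plan is to deduce the conjecture from the LP of Theorem~\ref{thm:main_scylla} by aggregating the group-teeth variables into a parametric LP of fixed size, and then invoking parametric linear programming.

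First I would aggregate the teeth. Since $\Gamma$ is fixed, the set $L_j$ of letters from $G_j^k$ occurring in $\Gamma$ is independent of $k$. For each base $z\in L_j$, collapse the teeth $gt(x,y,n,z)$ at interior positions $1\le n\le o_{j,k}-2$ into aggregate variables $U(x,y,z)$, while keeping the boundary teeth at $n=0$ and $n=o_{j,k}-1$ (which force $x=z$ and $y=z$ respectively) as separate variables. Together with rectangles and triangles, this yields an LP with $O(|\Gamma|^3)$ variables, independent of the $o_{j,k}$. The edge-gluing constraint $(\iota\circ\partial)v=0$, the degree normalization $N_i(v)=1$, and the aggregated flow constraint $\partial_G v=0$ (a Kirchhoff-type identity for $U$ on the complete digraph on $L_j$, corrected by boundary teeth) are all independent of $o_{j,k}$. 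The sole place $o_{j,k}$ enters the constraint matrix is one identity per base, $\sum_{x,y}U(x,y,z)=(o_{j,k}-2)\,T_z$, where $T_z$ counts group polygons based at $z$.

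A crucial observation is that in these variables the objective is itself $o_{j,k}$-independent. The Euler characteristic of a group polygon is $1-m/2$, with $m$ its number of real (non-dummy) edges; summing over polygons, the tooth contribution to $\chi(v)$ rewrites as $\sum_z T_z-\tfrac{1}{2}E$, where $E$ is the total number of non-dummy teeth. Together with the $o_{j,k}$-independent $\chi$-values on rectangles and triangles, this means the full objective in the aggregated LP has no dependence on $o_{j,k}$.

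Third, one needs a \emph{realizability lemma}: for $o_{j,k}$ sufficiently large relative to $|L_j|$, every feasible aggregated solution lifts to a valid position-indexed tooth distribution, hence to an admissible surface for $\Gamma$ over $G_k$. This amounts to decomposing any balanced, nonnegative, integer weighting of the complete digraph on $L_j$, with the prescribed boundary behavior at $z$, into $T_z$ closed walks of length $o_{j,k}$ based at $z$; standard Eulerian-decomposition arguments handle this once $o_{j,k}$ exceeds an explicit bound in $|L_j|$. This is the principal technical obstacle, analogous in spirit to the sail-lattice arguments of \cite{Calegari_sails} but with the order of a finite factor, rather than a word length, as the varying parameter.

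With these ingredients, the reformulated LP is a parametric LP of bounded dimension in which the objective is independent of the $o_{j,k}$ and the constraint matrix depends on the $o_{j,k}$ only via finitely many entries, linearly. At each vertex of the feasible polyhedron, the optimal value is a priori a rational function of the $o_{j,k}$, but the low-rank parameter dependence forces this rational function to simplify to a linear function of $\tau_j:=1/o_{j,k}$ on each combinatorial type; minimizing over vertices then yields a piecewise-linear function of $(\tau_j)$. Specializing to $o_{j,k}\in\Z_{>0}$ gives the piecewise-quasilinear dependence in $(1/o_{j,k})$ asserted by the conjecture. Corollary~\ref{cor:abAB_formula}, $\scl_G([a,b])=1/2-1/o$, illustrates the expected linear-in-$1/o$ shape in the simplest nontrivial case.
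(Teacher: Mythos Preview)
Note first that the paper does \emph{not} prove this statement: it is labeled as a conjecture and is presented as a summary of the experimental data that follows. So there is no ``paper's own proof'' to compare against, and your proposal should be read as a sketch toward settling an open problem rather than as a reconstruction.

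As a sketch, your outline has the right architecture --- aggregate the position index on group teeth, reduce to a fixed-size parametric LP, invoke parametric LP theory --- but there are two genuine gaps.

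First, you yourself flag the realizability lemma as ``the principal technical obstacle'' and then do not prove it. Decomposing a balanced Eulerian weighting on the digraph on $L_j$ into $T_z$ closed walks each of length \emph{exactly} $o_{j,k}$ is not a standard Eulerian-decomposition result; Euler gives you a decomposition into cycles of uncontrolled lengths, and reassembling these into walks of a prescribed length is a nontrivial combinatorial problem whose feasibility can depend on divisibility properties of $o_{j,k}$. This is likely where the ``quasi'' in quasilinear actually enters, and your argument does not address it.

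Second, and relatedly, your parametric-LP step asserts that the optimal value is linear in $\tau_j=1/o_{j,k}$ on each combinatorial type, but the conjecture (and the data in the paper, e.g.\ $\scl_G(abAABB+ab)=2/3-\{2/3,1/2\}/\min(o_1,o_2)$) demands \emph{quasi}linearity: coefficients that depend on the residue class of $o_{j,k}$. A pure rational-LP analysis, with no integrality constraint in sight, cannot produce residue-dependent coefficients. So either the aggregated LP does not exactly compute $\scl$ (the realizability failure for certain residues forces a different optimum), or the claimed simplification of vertex values to linear functions of $\tau_j$ is wrong. In either case the passage from ``piecewise rational in $o_{j,k}$'' to ``piecewise quasilinear in $1/o_{j,k}$'' is precisely the content of the conjecture, and your sketch assumes it rather than deriving it.
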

Conjecture~\ref{conj:main_conj} summarizes the results of experiments below
and complements the main theorem in~\cite{CW_quasipoly}.  
We now give specific examples illustrating quasilinear behavior.  
In all cases, we take $G$ to be generated by $a,b,\ldots$ with 
orders $o_1, o_2, \ldots$.  Brackets indicate coefficients depending on the 
residue class of the input, so for example the $\{2/3,1/2\}$ appearing 
in the formula for $\scl_G(abAABB+ab)$ means that the coefficient of $1/\min(o_1, o_2)$ 
is $2/3$ if $\min(o_1, o_2)\%2 = 0$ and $1/2$ if $\min(o_1, o_2)\%2 = 1$.
All of these formulas have been experimentally verified for thousands of values.

\vspace{5mm}
\begin{tabular}{rll}
$\scl_G([a,b])$ & $= 1/2 - 1/\min(o_1, o_2)$ & if $\min(o_1,o_2) \ge 2$ \\ [1.5ex]
$\scl_G(abAABB + ab)$ & $= 2/3 - \{2/3,1/2\}/\min(o_1, o_2)$ & if $\min(o_1,o_2) \ge 2$ \\ [1.5ex]
$\scl_G(abAAABBB)$ & $= 3/4 - 1/o_1 - 1/o_2$ & if $\min(o_1, o_2) \ge 7$ \\ [1.5ex]
$\scl_G(aabABAAbaB)$ & $= 1/2 - \{2,1\}/o_1$ & if $\min(o_1, o_2) \ge 3$ \\ [1.5ex]
$\scl_G(aba^2b^2a^3b^3A^5B^5)$ & $=1 - \frac{1}{2o_1} - \frac{1}{2o_2}$ & if $\min(o_1, o_2) \ge 6$ \\ [1.5ex]
\end{tabular}
\vspace{5mm}

Small changes can strongly affect $\scl$: above we 
see $\scl_G(aba^2b^2a^3b^3A^5B^5)$ has a simple formula.  However, 
$\scl_G(aba^2b^2a^3b^3A^6B^6)$ exhibits behavior which is 
piecewise periodic in both orders with period $6$.  For simplicity, 
we plot the value of $\scl$ for fixed $o_1=100$ as $o_2$ varies.  
See Figure~\ref{fig:periodic_scl}.

\begin{figure}[ht]
\labellist
\small\hair 2pt
 \pinlabel {$10$} at 50 -3
 \pinlabel {$20$} at 152 -3
 \pinlabel {$30$} at 254 -3
 \pinlabel {$40$} at 357 -3
 \pinlabel {$50$} at 458 -3
 \pinlabel {$1.01$} at -14 91
 \pinlabel {$1.02$} at -14 174
 \pinlabel {$1.03$} at -14 257
 \pinlabel {$o_2$} at 490 0
 \pinlabel {$\scl$} at -6 290
\endlabellist
\includegraphics[scale=0.55]{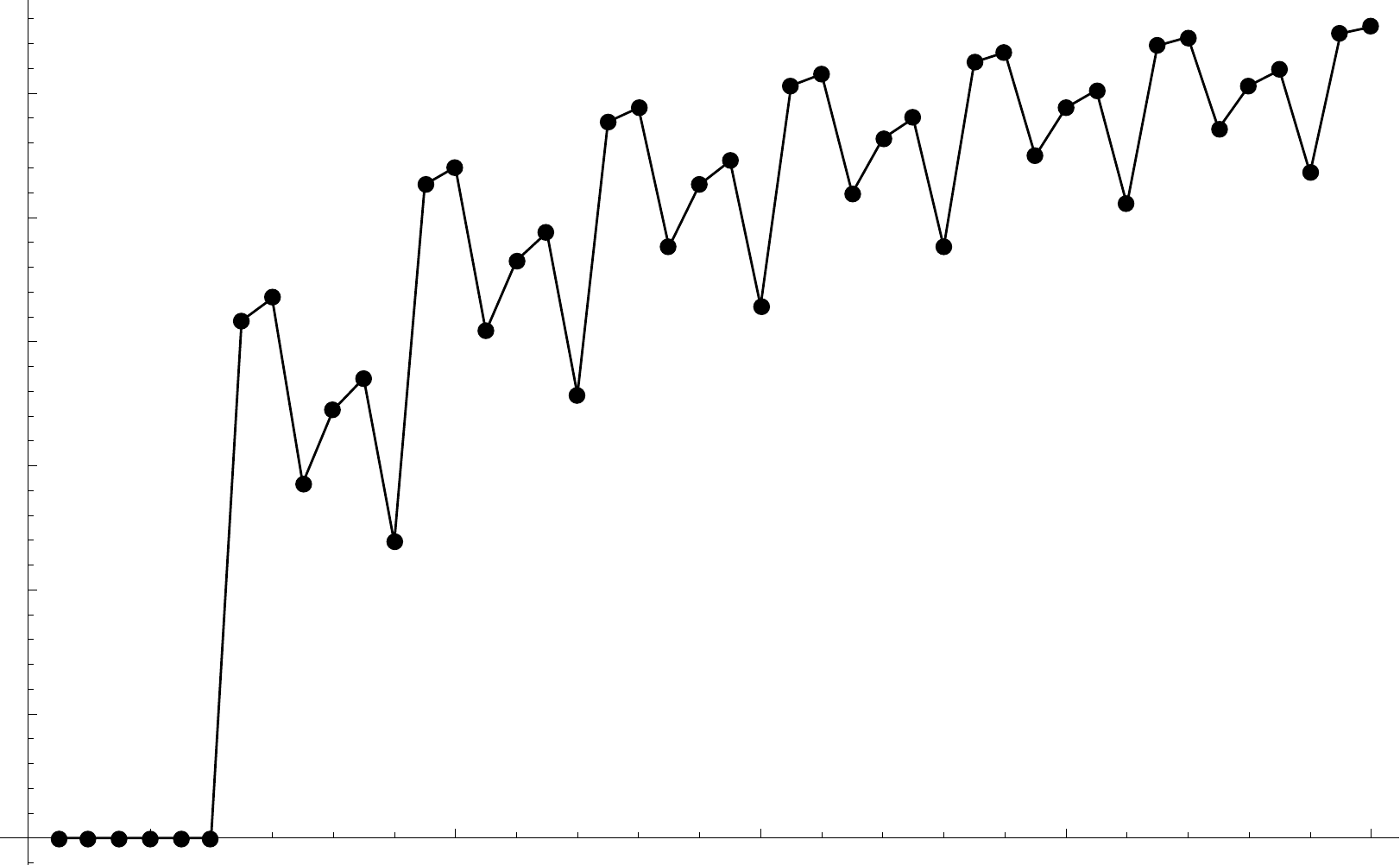}
\caption{The value of $\scl_G(aba^2b^2a^3b^3A^6B^6)$ as $o_2$ varies, 
for fixed $o_1=100$.  Note the initially flat behavior, followed by 
quasi-linearity in $1/o_2$ with period $6$.}
\label{fig:periodic_scl}
\end{figure}

For $\scl_G(abABacAC)$, the formula has four pieces, each of which is quasilinear.  
It is clearest to fix $o_1$ and draw the regions in the $o_2$-$o_3$ plane 
with the appropriate formulas.  The formulas are shown in Figure~\ref{fig:abABacAC} 
and apply whenever $\min(o_1, o_2, o_3) \ge 2$.
Note that if we fix $o_1$ and let the other 
orders grow, $\scl$ is eventually constant.  This almost certainly reflects the 
fact that for $o_1$ fixed, eventually it is best to build an extremal 
surface using only rectangles, not group polygons, for the other factors.

\begin{figure}[ht]
\labellist
\small\hair 2pt
 \pinlabel {$1-\frac{1}{2o_2}-\frac{1}{2o_3}$} at 41 40
 \pinlabel {$1-\frac{\{1,3/4\}}{o_1}-\frac{1}{2o_2}$} at 130 40
 \pinlabel {$1-\frac{\{2,3/2\}}{2o_1}$} at 120 120
 \pinlabel {$1-\frac{\{1,3/4\}}{o_1}-\frac{1}{2o_3}$} at 41 120
 \pinlabel {$o_2$} at -10 71
 \pinlabel {$o_3$} at 74 -10
 \pinlabel {$\left(\left\{\frac{1}{2},\frac{2}{3}\right\}o_1,\left\{\frac{1}{2},\frac{2}{3}\right\}o_1\right)$} at 240 105
\endlabellist
\includegraphics[scale=0.9]{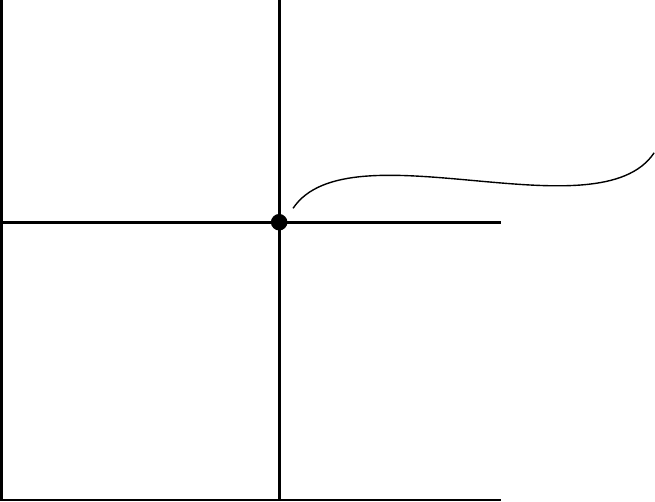}
\vspace{1mm}
\caption{For fixed $o_1$, the $o_2$-$o_3$ plane and piecewise formulas for 
$\scl(abABacAC)$.  All brackets refer to the residue class of $o_1$.  
Wherever the regions intersect in an integer point, the 
formulas agree.}
\label{fig:abABacAC}
\end{figure}

\subsection{Histograms}

Using \texttt{scylla}, it's simple to produce 
a histogram for free products of finite cyclic groups 
analogous to the histogram shown in Figure~\ref{fig:histogram}.
Figure~\ref{fig:scylla_histogram_23} shows a histogram of the 
$\scl$ spectrum in $\Z/3\Z * \Z/2\Z$ for many random 
words of length $40$, and Figure~\ref{fig:scylla_histogram_34} 
shows a histogram for many random words of length $30$ in $\Z/4\Z * \Z/3\Z$.  

\begin{figure}[ht]
\begin{center}
\labellist
\small\hair2pt
\pinlabel $\frac{1}{3}$ at 79 -16
\pinlabel $\frac{1}{2}$ at 137 -16
\pinlabel $\frac{2}{3}$ at 193 -16
\pinlabel $\frac{5}{6}$ at 252 -16
\pinlabel $1$ at 309 -15
\endlabellist
\includegraphics[scale=0.5]{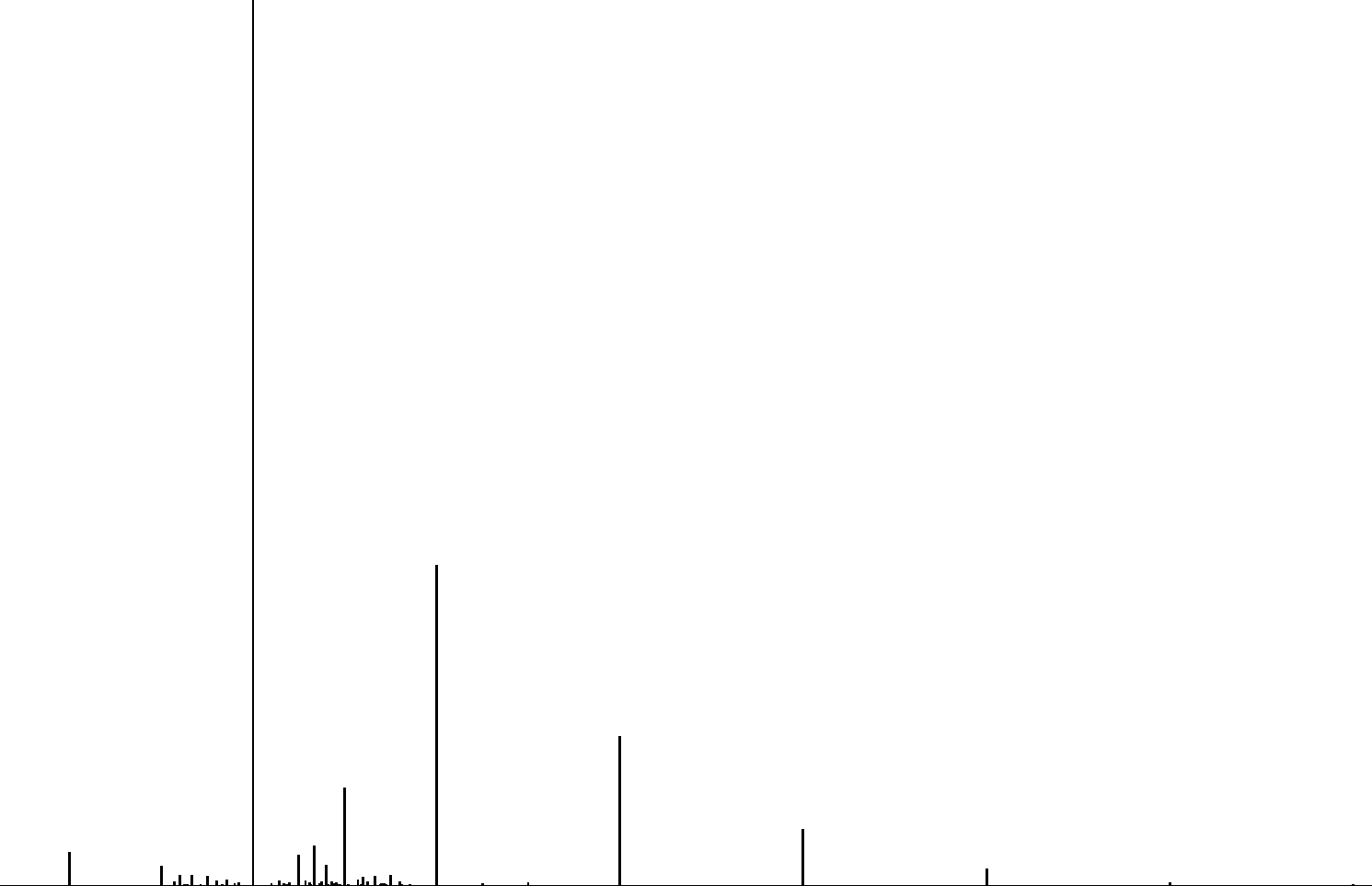}
\vspace{1mm}
\end{center}
\caption{A histogram of $\scl$ values of random words of length 
$40$ in $\Z/3\Z*\Z/2\Z$.}
\label{fig:scylla_histogram_23}
\end{figure}
\begin{figure}[ht]
\begin{center}
\labellist
\small\hair2pt
\pinlabel $\frac{5}{6}$ at 148 -20
\pinlabel $1$ at 208 -16
\pinlabel $\frac{7}{6}$ at 269 -20
\pinlabel $\frac{2}{3}$ at 86 -20
\endlabellist
\includegraphics[scale=0.45]{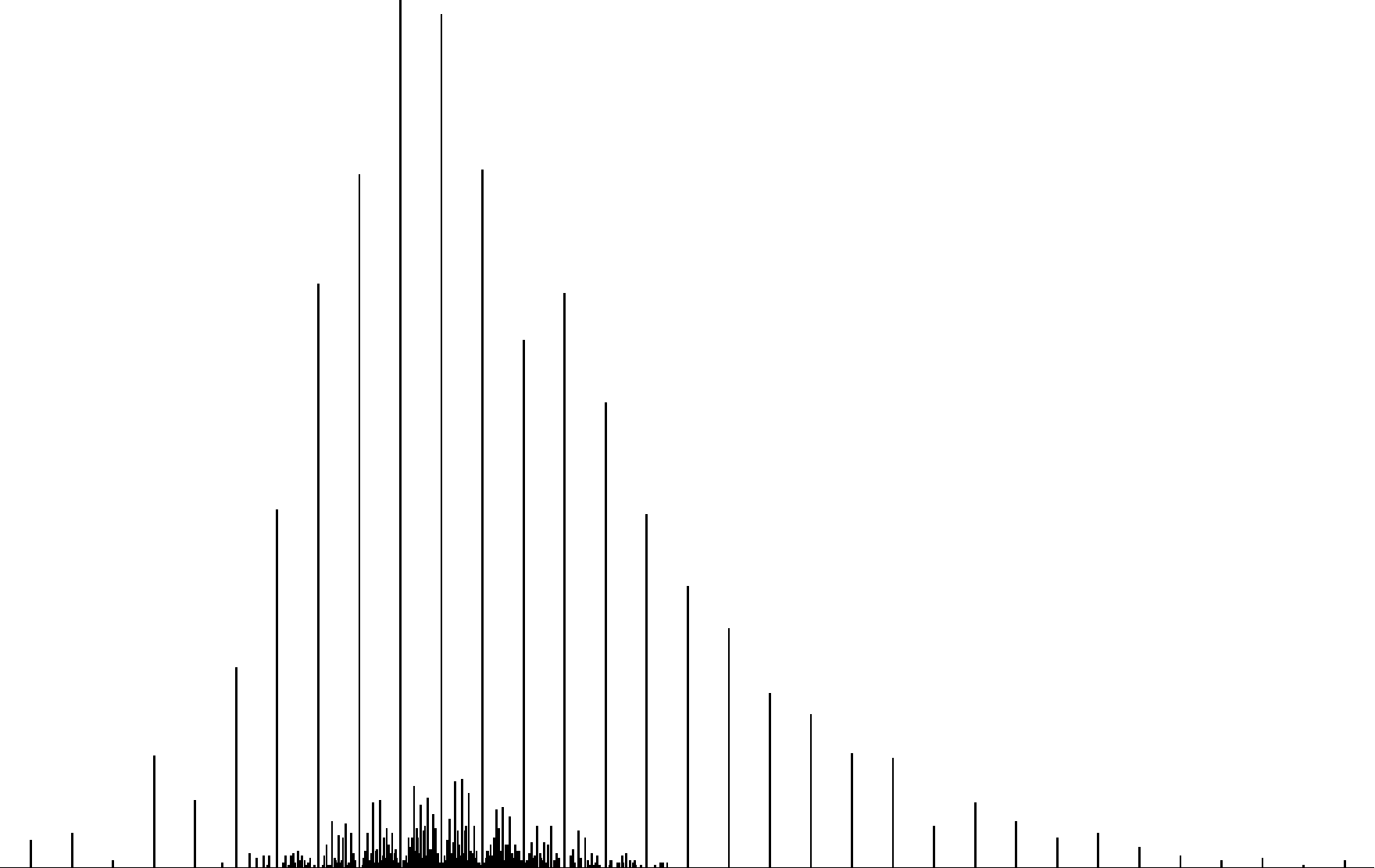}
\vspace{1mm}
\end{center}
\caption{A histogram of $\scl$ values of random words of length 
$30$ in $\Z/4\Z*\Z/3\Z$.  The gap between spikes is $1/24$.}
\label{fig:scylla_histogram_34}
\end{figure}

\subsection{Extensions}

As we mentioned in Remark~\ref{rem:proof_methods}, there are 
two ways to decompose surfaces: our method, and 
the technique which essentially builds surfaces out of 
van Kampen disks.  This latter method is messier in our situation, 
so we have avoided it.  However, it generalizes to more complicated 
group presentations, and from this perspective, it is 
clearer that the essential feature of free products of cyclic groups 
we have used is that there are no internal vertices in a reduced 
van Kampen diagram.

Consider a group presentation $G = \langle G \,| \, R\rangle$.  
We symmetrize $R$ and add in all cyclic conjugates.  A \emph{piece} 
in the presentation is word $u$ which occurs as a maximal common 
initial subword to two distinct words $r_1, r_2 \in R$.  Note that 
$r_1$ and $r_2$ may be the same cyclic word, but they are not allowed 
to be the same word.  A piece is simply an alignment of subwords 
in different relators.  Note that an internal edge in a van Kampen diagram 
is a part of a piece.

The key problem to compute $\scl$ as a linear programming problem is to understand 
how to compute the Euler characteristic of a surface knowing only the 
relators out of which it is built, and this is difficult in the presence 
of internal vertices in the van Kampen diagram.
If we require that the pieces in every relator are 
separated by at least one letter, for example, then 
the \texttt{scylla} algorithm strategy should still work, because 
internal vertices can be simplified away.  
However, this requirement is somewhat meaningless, because these group 
presentations are secretly just presentations of free products of 
cyclic groups!

\bibliography{biblio}{}
\bibliographystyle{plain}

\end{document}